\documentclass{article}
\usepackage{amsmath,amsfonts,amsthm,amssymb,mathrsfs,latexsym,paralist}

\usepackage{tikz}
\usetikzlibrary{arrows,automata}

\tikzstyle{V}=[fill=black,circle,scale=0.1, outer sep = 4pt]

\newtheorem{thm}{Theorem}[section]
\newtheorem{prop}[thm]{Proposition}
\newtheorem{cor}[thm]{Corollary}
\newtheorem{lemma}[thm]{Lemma}
\theoremstyle{remark}
\newtheorem{rmk}[thm]{Remark}

\theoremstyle{definition}
\newtheorem{defn}[thm]{Definition}

\DeclareMathOperator{\Ad}{Ad}

\newcommand{\z}{^{(0)}}
\newcommand{\2}{^{(2)}}
\newcommand{\inv}{^{-1}}
\newcommand{\1}{\textbf{1}}

\newcommand{\bi}{\begin{itemize}}
\newcommand{\ei}{\end{itemize}}
\newcommand{\be}{\begin{enumerate}}
\newcommand{\ee}{\end{enumerate}}

\newcommand{\T}{\mathbb{T}}
\renewcommand{\H}{\mathcal{H}}

\newcommand{\G}{\mathcal{G}}
\newcommand{\K}{\mathcal{K}}

\newcommand{\R}{\mathbb{R}}
\newcommand{\N}{\mathbb{N}}
\newcommand{\Z}{\mathbb{Z}}

\begin{document}

\title{$K$-theory and Homotopies of 2-Cocycles on Higher-Rank Graphs}
%
 \author{Elizabeth Gillaspy\thanks{Department of Mathematics, University of Colorado - Boulder, Campus Box 395, Boulder, CO 80309-0395, {\tt elizabeth.gillaspy@colorado.edu}}}
\maketitle

\begin{abstract}
This paper continues our investigation into the question of when a homotopy 
of 2-cocycles on a locally compact Hausdorff groupoid gives rise to an isomorphism of the $K$-theory groups of the twisted groupoid $C^*$-algebras.
Our main result, which builds on  work by Kumjian, Pask, and Sims, shows that a homotopy of 2-cocycles on a row-finite higher-rank graph $\Lambda$ gives rise to twisted groupoid $C^*$-algebras with isomorphic $K$-theory groups.  (Here, the groupoid in question is the path groupoid of $\Lambda$.) 
We also establish a technical result: any homotopy of 2-cocycles on a locally compact Hausdorff groupoid $\G$ gives rise to an upper semicontinuous bundle of $C^*$-algebras. 

{\it Keywords:} Higher-rank graph, twisted groupoid $C^*$-algebra, $K$-theory, twisted $k$-graph $C^*$-algebra, upper semicontinuous $C^*$-bundle, $C_0(X)$-algebra, groupoid, 2-cocycle.

{\sc MSC (2010):} 46L05, 46L80.
\end{abstract}

\section{Introduction}


Higher-rank graphs, or $k$-graphs, provide a $k$-dimensional analogue of directed graphs.  They were introduced by Kumjian and Pask in \cite{kp} 
to provide a combinatorial model for the higher-rank Cuntz-Krieger algebras studied by Robertson and Steger in \cite{robertson-steger}. 
Much of the interest in the $C^*$-algebras $C^*(\Lambda)$ associated to  $k$-graphs $\Lambda$ stems from the multiple ways one can model $C^*(\Lambda)$ --- the  $k$-graph $\Lambda$ reflects many of the properties of $C^*(\Lambda)$, but we can also describe $C^*(\Lambda)$ as a universal $C^*$-algebra for certain generators and relations, or as a groupoid $C^*$-algebra $C^*(\Lambda) \cong C^*(\G_\Lambda)$.

The class of groupoids  includes groups, group actions, equivalence relations, and group bundles.  Renault initiated the study of groupoid $C^*$-algebras in \cite{renault}, and the theory and applications of groupoid $C^*$-algebras have since been developed by many researchers.
  Given a 2-cocycle $\omega \in Z^2(\G, \T)$ on a groupoid $\G$, Renault also explains in \cite{renault} how to construct the twisted groupoid $C^*$-algebra $C^*(\G, \omega)$.  
  These objects have received relatively little attention until quite recently, but it has now become clear that twisted groupoid $C^*$-algebras can help answer many questions about  the structure of untwisted groupoid $C^*$-algebras (cf.~\cite{cts-trace-gpoid-II, cts-trace-gpoid-III, clark-aH, dd-fell, jon-astrid}), as well as classifying those $C^*$-algebras which admit diagonal subalgebras (also known as Cartan subalgebras) --- cf.~\cite{c*-diagonals}. 
    In another direction, \cite{TXLG} establishes a connection between the $K$-theory of twisted groupoid $C^*$-algebras and the classification of $D$-brane charges in string theory. 
  
  
Two recent papers have explored the effect of a homotopy $\{\omega_t\}_{t\in[0,1]}$ of 2-cocycles on the $K$-theory of the twisted groupoid $C^*$-algebras.  First,  Echterhoff, L\"uck, Phillips, and Walters showed in  Theorem 1.9 of \cite{ELPW} that if $G$ is a group that satisfies the Baum-Connes conjecture with respect to the coefficient algebras $\K$ and $C([0,1],\K)$, and $\{\omega_t\}_{t \in [0,1]}$ is a homotopy of 2-cocycles on $G$, then the $K$-theory groups of the reduced twisted group $C^*$-algebras are unperturbed by the homotopy:
\begin{equation}
K_*(C^*_{r}(G, \omega_0)) \cong K_*(C^*_{r}(G, \omega_1)).
\end{equation}
In particular, taking $G = \Z^2$, we obtain another proof of the fact, established in 1980 by Pimsner and Voiculescu in \cite{pims-voic}, that all of the rotation algebras $\{A_\theta\}_{\theta \in [0,1]}$ have isomorphic $K$-theory groups.  

 
  Kumjian, Pask, and Sims also studied the effect of a homotopy of 2-cocycles on $K$-theory in \cite{kps-kthy}.  Theorem 5.4 of \cite{kps-kthy} establishes that if $\Lambda$ is a row-finite source-free $k$-graph and $c$ is a 2-cocycle on $\Lambda$ such that $c(\lambda, \mu) = e^{2\pi i \sigma(\lambda,\mu)}$ for some $\R$-valued 2-cocycle $\sigma$, then $K_*(C^*(\Lambda)) \cong K_*(C^*(\Lambda, c)).$  Defining $c_t(\lambda, \mu) = e^{2\pi i t \sigma(\lambda, \mu)}$ for $t \in [0,1]$ gives us a homotopy of 2-cocycles linking $c$ and the trivial 2-cocycle.  Moreover,
Corollary 7.8 of \cite{kps-twisted} tells us that $C^*(\Lambda, c)$ is isomorphic to a twisted groupoid $C^*$-algebra $C^*(\G_\Lambda, \omega_c)$.  Thus, we can view \cite{kps-kthy} Theorem 5.4 as a result about homotopic 2-cocycles on groupoids.

Inspired by the above-mentioned results, we have begun exploring the question of when a homotopy of 2-cocycles on a locally compact Hausdorff groupoid $\G$ induces an isomorphism of the $K$-theory groups of the (full or reduced) twisted groupoid $C^*$-algebras.
In a previous article \cite{transf-gps}, we extended the above-mentioned Theorem 1.9 of the paper \cite{ELPW} by Echterhoff et al.~to the case when $\G= G \ltimes X$ is a transformation group, where $X$ is locally compact Hausdorff and $G$ satisfies  the Baum-Connes conjecture with coefficients.

In this article, we prove the following generalization of \cite{kps-kthy} Theorem 5.4: 
\newtheorem*{main}{Theorem \ref{main}}
\begin{main}
Let $\Lambda$ be a row-finite $k$-graph with no sources and let $\{c_t\}_{t \in [0,1]}$ be a homotopy of 2-cocycles in $\underline{Z}^2(\Lambda, \T)$.  Then $\{c_t\}_{t\in [0,1]}$ gives rise to a homotopy $\{\sigma_{c_t}\}_{t \in [0,1]}$ of 2-cocycles on $\G_\Lambda$ such that
\[K_*(C^*(\G_\Lambda, \sigma_{c_0})) \cong K_*(C^*(\G_\Lambda, \sigma_{c_1})).\]
\end{main}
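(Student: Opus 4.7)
My plan proceeds in three stages. \textbf{First}, I would transfer the homotopy from the $k$-graph $\Lambda$ to the path groupoid $\G_\Lambda$ using Corollary 7.8 of \cite{kps-twisted}: this gives an explicit pointwise formula $c \mapsto \sigma_c$ sending each $c_t$ to a 2-cocycle $\sigma_{c_t}$ on $\G_\Lambda$, and continuity in $t$ should transfer routinely from the pointwise definition of the correspondence. \textbf{Second}, I would invoke the technical result announced in the abstract to produce an upper semicontinuous $C^*$-bundle $\mathcal{A} \to [0,1]$ with fibers $C^*(\G_\Lambda, \sigma_{c_t})$. Writing $A$ for its section algebra, we obtain a $C([0,1])$-algebra equipped with evaluation maps $\mathrm{ev}_t \colon A \to C^*(\G_\Lambda, \sigma_{c_t})$. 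The theorem then reduces to showing that $(\mathrm{ev}_0)_*$ and $(\mathrm{ev}_1)_*$ are isomorphisms on $K$-theory, since composing one with the inverse of the other yields the desired isomorphism.

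\textbf{Third}, and most substantively, I would adapt the strategy of Theorem 5.4 of \cite{kps-kthy}. That proof computes $K_*(C^*(\Lambda, c))$ by peeling off the $k$ coordinate directions of $\Lambda$ one at a time via a Pimsner--Voiculescu-type exact sequence arising from the $\T^k$-gauge structure, and shows that in the exponential case the resulting $K$-groups are independent of the cocycle. My plan is to run the same inductive scheme at the level of the bundle algebra $A$: at each stage one obtains a six-term exact sequence for $A$ that restricts to the corresponding sequence on each fiber, and naturality of the long exact sequence with respect to the evaluation maps, combined with the $C([0,1])$-module structure, should force the fiber $K$-theory to be locally---hence globally, on the connected interval $[0,1]$---constant.

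\textbf{Main obstacle.} The principal difficulty is that the argument of \cite{kps-kthy} exploits the explicit algebraic form of the exponential homotopy $c_t = e^{2\pi i t \sigma}$, trading the cocycle for an $\R$-valued 2-cochain $\sigma$ whose rescaling behaviour $t\sigma$ is used to identify the relevant twisted algebras with concrete crossed products. No such $\R$-valued skeleton is available under an arbitrary homotopy in $\underline{Z}^2(\Lambda,\T)$, so the inductive decomposition must be performed directly for the bundle $\mathcal{A}$ and transported to each fiber while tracking compatibility with the evaluation maps carefully enough to read off $K$-theory isomorphisms at the endpoints. A secondary subtlety is ensuring that the relevant gauge action on $C^*(\G_\Lambda, \sigma_{c_t})$ fits together continuously over $[0,1]$, so that the Pimsner--Voiculescu machine makes sense at the bundle level rather than only fiberwise.
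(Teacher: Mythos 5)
Your stages one and two line up with the paper: the transfer $c_t \mapsto \sigma_{c_t}$ and its continuity are Definition \ref{gpoid-htopy} and Proposition \ref{2.1}, and the $C([0,1])$-algebra with fibers $C^*(\G_\Lambda,\sigma_{c_t})$ is Theorem \ref{fiber-alg}. The gap is in stage three, which is where all the real work lies. A $C([0,1])$-algebra structure --- even a continuous field --- does not force the fiber $K$-theory to be locally constant, and naturality of six-term sequences with respect to the evaluation maps cannot, by itself, make $(\mathrm{ev}_t)_*$ an isomorphism; you need an actual reason why evaluation is a $K$-equivalence, and for the bundle $C^*(\G_\Lambda\times[0,1],\omega)$ itself no such reason is offered (nor does the paper prove one). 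Your ``main obstacle'' paragraph correctly identifies that the exponential form $c_t=e^{2\pi i t\sigma}$ is unavailable, but the proposal stops short of the idea that replaces it.

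That idea is the skew product $\Lambda\times_d\Z^k$. Its degree map is a coboundary $d=\delta b$, so every $C^*(\Lambda\times_d\Z^k, c_t\circ\phi)$ is AF and isomorphic to the untwisted algebra via explicit diagonal unitaries $U_n^t$ built from the functions $\kappa_t$; because these unitaries depend continuously on $t$, the associated $C([0,1])$-algebra is a \emph{trivial} field $C([0,1])\otimes C^*(\Lambda\times_d\Z^k)$ (Proposition \ref{trivial}), and only then is evaluation at $t$ a homotopy equivalence. One then transports the translation action of $\Z^k$ through this trivialization to an action $\beta$ on the bundle, checks that $q_t$ is $\beta$--$\beta_t$ equivariant, applies Theorem 5.1 of \cite{kps-kthy} (this is where the iterated Pimsner--Voiculescu argument actually lives --- it is run on the $\Z^k$-crossed products, not on a gauge decomposition of $\Lambda$ itself), and finishes with the Morita equivalence $C^*(\Lambda,c_t)\sim_{ME} C^*(\Lambda\times_d\Z^k, c_t\circ\phi)\rtimes_{lt}\Z^k$. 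Note also the pitfall recorded in the paper's closing remark: the fiberwise isomorphisms $C^*(\Lambda\times_d\Z^k,c_t\circ\phi)\cong C^*(\Lambda\times_d\Z^k)$ are not $\Z^k$-equivariant, which is exactly why the bundle-level argument, rather than a fiberwise one, is needed. Without the skew-product trivialization your inductive scheme has no base to stand on.
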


As of this writing, we are unaware of any examples of groupoids $\G$ and homotopies $\omega = \{\omega_t\}_{t\in[0,1]}$ of 2-cocycles on $\G$ where the homotopy does not induce an isomorphism of the $K$-theory groups of the twisted groupid $C^*$-algebras. 

\subsection{Outline}
This paper begins by recalling the definitions of a higher-rank graph and of a groupoid in Section \ref{2}, as well as the definition of a 2-cocycle in each category, and sketching the procedure by which we can construct a $C^*$-algebra from these objects.  In Section \ref{3} we define a homotopy of 2-cocycles on a $k$-graph and on a groupoid, and show that the definitions are compatible.  We also prove a technical result (Theorem \ref{fiber-alg}), namely, that a homotopy $\{\omega_t\}_{t \in [0,1]}$ of 2-cocycles on a groupoid $\G$ gives rise to a $C([0,1])$-algebra with fiber algebra $C^*(\G, \omega_t)$ at $t \in [0,1]$.  We expect that this result will prove useful in future work, as we search for more classes of groupoids where a homotopy of 2-cocycles induces an isomorphism of the $K$-theory groups of the twisted groupoid $C^*$-algebras.

In Section \ref{final} we begin the proof of Theorem \ref{main}.  Our proof technique consists of proving a stronger version of Theorem \ref{main} in a simple case, and then showing how to use this simple case to obtain our desired result for general $k$-graphs.  To be precise, Proposition \ref{trivial} shows that when the degree map $d$ on $\Lambda$ satisfies $d(\lambda) = b(s(\lambda)) - b(r(\lambda))$ for all $\lambda \in \Lambda$, the $C([0,1])$-algebra associated to a homotopy of 2-cocycles on $\Lambda$ is actually a trivial continuous field.   Section \ref{5} shows how to   exploit the triviality of the continuous field in this special case to see that a homotopy of 2-cocycles on any row-finite, source-free $k$-graph $\Lambda$ induces an isomorphism $K_*(C^*(\Lambda, c_0)) \cong K_*(C^*(\Lambda, c_1))$.  The argument in this section closely parallels Section 5 of \cite{kps-kthy}.

\textbf{Acknowledgments:} This research constitutes part of my PhD thesis, and I would like to thank my advisor, Erik van Erp, for his encouragement and assistance.  I would also especially like to thank Prof.~Jody Trout for his help and support throughout my PhD work.  

\section{Groupoids and $k$-graphs} 
\label{2}

\begin{defn}[cf.~\cite{kp} Definition 1.1]
\label{defn-kgraph}
A \emph{higher-rank graph} of degree $k$, or a $k$-graph, is a nonempty countable small category $\Lambda$ equipped with a functor $d: \Lambda \to \N^k$ (the \emph{degree map}) satisfying the following \emph{factorization property}: Given a morphism $\lambda \in \Lambda$ with $d(\lambda) = m + n$, there exist unique $\mu, \nu \in \Lambda$ such that $\lambda = \mu \nu$ and $d(\mu) = m, d(\nu) = n$.  \end{defn}
The simplest example of a $k$-graph is $\N^k$, equipped with the identity morphism $id: \N^k \to \N^k$.

In this article we will use the arrows-only picture of category theory, so that we think of the objects of a category $\Lambda$ as identity morphisms.  Hence, $\lambda \in \Lambda$ means that $\lambda$ is a morphism in $\Lambda$.  
Given an element $\lambda $ in a category $ \Lambda$, write $s(\lambda)$ for the domain, or \emph{source}, of the morphism $\lambda$, and $r(\lambda)$ for its target, or \emph{range}.
We say $k$-graph $\Lambda$ is \emph{row-finite} if, for any $v \in \text{Obj}(\Lambda)$ and any $n \in \N^k$, the set 
\[v \Lambda^n := \{ \lambda \in \Lambda: r(\lambda) = v, d(\lambda) = n\}\]
is finite.  We say $\Lambda$ has \emph{no sources} if $v \Lambda^n \not= \emptyset$ for every $v \in \text{Obj}(\Lambda)$ and every $n \in \N^k$.
We will only consider $k$-graphs which are row-finite and have no sources, since these are the $k$-graphs which we can study via the groupoid method that was introduced in \cite{kp} and which we will explain in Section \ref{groupoids}.


\begin{defn}[cf.~\cite{renault} Definition I.1.12, \cite{kps-twisted} Section 3]
For a category $\Lambda$, let $\Lambda^{*2} = \{ (\lambda_1,  \lambda_2) \in \Lambda \times \Lambda: s(\lambda_1) = r(\lambda_2)\}.$
A function $c: \Lambda^{*2} \to \T$ is called a \emph{2-cocycle} on $\Lambda$ if
\begin{equation}
c(\lambda, \mu\nu) c(\mu, \nu) = c(\lambda \mu, \nu) c(\lambda, \mu)
\label{k-cocycle}
\end{equation}
whenever $(\lambda, \mu), (\mu,\nu) \in \Lambda^{*2}$, and $c(\lambda, s(\lambda)) = c(r(\lambda), \lambda) = 1 \ \forall \ \lambda \in \Lambda.$
We write $\underline{Z}^2(\Lambda, \T)$ for the set of 2-cocycles on $\Lambda.$
\label{k-cocycle-defn}

If $c, \tilde{c}$ are two 2-cocycles on $\Lambda$, we say that $c,\tilde{c}$ are \emph{cohomologous} if there exists a function $b: \Lambda \to \T$ such that 
\[\tilde{c}(\mu,\nu) := b(\mu) b(\nu) b(\mu \nu)\inv c(\mu,\nu) = \delta b(\mu,\nu) c(\mu,\nu) \ \forall \ (\mu,\nu) \in \Lambda^{*2}.\]
We note that cohomologous 2-cocycles give rise to isomorphic twisted $C^*$-algebras (cf.~\cite{kps-twisted} Proposition 5.6, \cite{renault} Proposition II.1.2).
\end{defn}
The only cocycles we will consider  in this paper are 2-cocycles, so we will occasionally drop the 2 and refer to them simply as \emph{cocycles}.


%
%

\begin{defn}[\cite{kps-twisted} Definition 5.2]
The \emph{twisted higher-rank-graph algebra} $C^*(\Lambda, c)$ associated to a $k$-graph $\Lambda$ and a 2-cocycle $c$ on $\Lambda$ is the universal $C^*$-algebra generated by a collection $\{s_\lambda\}_{\lambda \in \Lambda}$ of partial isometries satisfying the following \emph{twisted Cuntz-Krieger relations}:
\bi
\item[(CK1)] $\{s_v\}_{v \in \text{Obj}(\Lambda)}$ is a collection of mutually orthogonal projections;
\item[(CK2)] $s_\mu s_\nu = c(\mu,\nu) s_{\mu \nu}$ whenever $s(\mu) = r(\nu)$;
\item[(CK3)] $s_\mu^* s_\mu = s_{s(\mu)}$ for all $\mu \in \Lambda$;
\item[(CK4)] $s_v = \sum_{\mu \in v\Lambda^n} s_\mu s_\mu^*$ for all $v \in \text{Obj}(\Lambda)$ and all $n \in \N^k$.
\ei
\end{defn}


Note that every $k$-graph $\Lambda$ admits at least one 2-cocycle: the trivial cocycle, obtained by setting $c(\lambda, \mu) = 1$ for all $(\lambda, \mu) \in \Lambda^{*2}$.
In this case, the definition above of $C^*(\Lambda, c)$ agrees with that of $C^*(\Lambda)$ given in \cite{kp} Definitions 1.5.  
For example, if $\Lambda = \N^k$ and $c$ is the trivial cocycle, then $C^*(\Lambda, c) \cong C(\T^k)$.
More generally, if $\Lambda = \N^2$, let $c_\theta: \Lambda^{*2} \to \T$ be given by $c_\theta( (m,n), (j,k)) = e^{2\pi i \theta nj}.$
Then $c_\theta$ is a 2-cocycle on $\Lambda$ and $C^*(\Lambda, c_\theta)$ is isomorphic to the rotation algebra $A_\theta$. 


\subsection{Groupoids}
\label{groupoids}
%

In this section, we review the construction of a twisted groupoid $C^*$-algebra set forth in \cite{renault}, as well as the procedure given in the seminal article \cite{kp} for associating a groupoid to a $k$-graph.  Theorem \ref{fiber-alg} in Section \ref{3} 
applies to arbitrary locally compact Hausdorff groupoids, so we present in full generality all the definitions necessary for the construction of a twisted groupoid $C^*$-algebra.  

A \emph{groupoid} $\G$ is a small category with inverses.  We use the notation of \cite{renault} to denote groupoid elements and operations; for example, $\G\2\subseteq \G \times \G$ denotes the set of composable pairs and $\G\z$ denotes the unit space. If $u \in \G\z$, write 
\[\G_u = \{x \in \G: s(x) = u\} \qquad \quad \G^u = \{x \in \G: r(x) = u\}.\]
 In this article, we restrict our attention to groupoids which admit a locally compact Hausdorff topology in which the operations of composition (or multiplication) and inversion are continuous.

In addition to the groupoids associated to $k$-graphs, examples of groupoids include groups, vector bundles, and transformation groups.  For more details and examples, see \cite{geoff-thesis, muhly-gpoids}.

Given a row-finite, source-free $k$-graph $\Lambda$, Section 2 of \cite{kp} describes how to form the associated path groupoid $\G_\Lambda$:

\begin{defn}[\cite{kp} Examples 1.7(ii)]
We define the $k$-graph $\Omega_k$ to be the category with $\text{Obj}(\Omega_k) = \N^k$, and morphisms $\Omega_k = \{(m,n) \in \N^k \times \N^k: n \geq m\}.$
We have $r(m,n) = m, s(m,n) = n, d(m,n) = n-m$.  Composition in $\Omega_k$ is given by  $(m,n)(n, \ell) = (m, \ell).$
\label{omega-k}
\end{defn}
For a $k$-graph $\Lambda,$ let $\Lambda^\infty$ denote the set of degree-preserving functors $x: \Omega_k \to \Lambda$.  When $k=1$, the elements $x \in \Lambda^\infty$ are the infinite paths in $\Lambda$. 

Given $p \in \N^k$, define $\sigma^p: \Lambda^\infty \to \Lambda^\infty$ by 
$\sigma^p(x)(m,n) = x(m+p, n+p).$
When $\Lambda$ is row-finite and source-free, Proposition 2.3 in \cite{kp} shows that if $\lambda \in \Lambda, x \in \Lambda^\infty$ satisfy $s(\lambda) = x(0)$, there is a unique $y \in \Lambda^\infty$ such that $\sigma^{d(\lambda)}(y) = x$; we often write $y=\lambda x$.

\begin{defn}[\cite{kp} Definition 2.1]
Given a row-finite, source-free $k$-graph $\Lambda$, the associated path groupoid $\G_\Lambda$ is the groupoid associated to the equivalence relation on $\Lambda^\infty$ of ``shift equivalence with lag.'' 
In other words,
\[\G_\Lambda := \{ (x, n-m,  y) \in \Lambda^\infty \times \Z^k  \times \Lambda^\infty: n,m \in \N^k, 
\sigma^n(x) = \sigma^m(y)\},\]
and $\G\z_\Lambda = \Lambda^\infty$, with $r(x, \ell, y) = x,\, s(x, \ell, y) = y$, and 
multiplication and inversion in $\G_\Lambda$  given by $(x, \ell, y) (y, m, z) = (x, \ell + m, z); \ (x, \ell, y)\inv = (y, -\ell, x).$

When $\Lambda$ is a row-finite, source-free $k$-graph,  Proposition 2.8 in \cite{kp} tells us that the sets 
\[Z(\mu, \nu) := \{(\mu x, d(\mu) - d(\nu), \nu x): x(0) = s(\mu) = s(\nu)\}\]
form a basis of compact open sets for a locally compact Hausdorff topology on $\G_\Lambda$ (in fact, with this topology $\G_\Lambda$ is an ample \'etale groupoid).
\end{defn}

To build a $C^*$-algebra out of a groupoid $\G$ we will  start by putting a $*$-algebra structure on $C_c(\G)$, and to do this we will need to integrate over the groupoid $\G$.  A Haar system $\{ \lambda^u\}_{u \in \G\z}$ (the groupoid analogue of Haar measure for groups, cf.~Definition I.2.2 in \cite{renault}) will allow us to do this.
Unlike in the group case, one cannot make existence or uniqueness statements about Haar systems for groupoids, so one usually starts by hypothesizing the existence of a fixed Haar system. 
For example, we obtain a Haar system $\{\lambda^x\}_{x \in \Lambda^\infty}$ on $\G_\Lambda$ by setting 
\[ \lambda^x(E) = \# \{ e \in E: e = (x, n, y) \text{ for some } n \in \Z^k, y \in \Lambda^\infty\}.\]
 We will always use this Haar system on $\G_\Lambda$ in this paper.

\begin{defn}
Let $\G$ be a locally compact Hausdorff groupoid equipped with a Haar system $\{\lambda^u\}_{u \in \G^{(0)}}$ and a continuous 2-cocycle $\omega$.  We define a $*$-algebra structure on $C_c(\G)$ as follows: for $f, g \in C_c(\G)$ let
\[f*_\omega g(a) = \int_{\G_{s(a)}} f(ab) g(b^{-1}) \omega(ab, b^{-1}) d\lambda^{s(a)}(b),\quad f^*(a) = \overline{f(a^{-1}) \omega(a,a^{-1})}.\]
\label{conv-alg}
\end{defn}
One can check (cf. \cite{renault} Proposition II.1.1)
that the multiplication is well defined (that is, that $f*_\omega g \in C_c(\G)$ as claimed) and associative, and that $(f^*)^* = f$ so that the involution is involutive.  The proof of associativity relies on the cocycle condition \eqref{k-cocycle}.

Given the fundamental role that the cocycle $\omega$ plays in the multiplication and involution on $C_c(\G)$, we will often write $C_c(\G, \omega)$ to denote the set $C_c(\G)$ equipped with the $*$-algebra structure of Definition \ref{conv-alg}.
We define $C^*(\G, \omega)$ to be the completion of $C_c(\G, \omega)$ in the maximal $C^*$-norm, as described in Chapter II of \cite{renault}. 

\begin{defn}
When $\G= \G_\Lambda$ is the groupoid associated to a row-finite $k$-graph $\Lambda$ with no sources, Lemma 6.3 of \cite{kps-twisted} explains how, given a cocycle $c \in \underline{Z}^2(\Lambda, \T)$, we can construct a cocycle $\sigma_c \in Z^2(\G_\Lambda, \T)$.  Then Corollary 7.8 of \cite{kps-twisted} shows that $C^*(\Lambda, c) \cong C^*(\G_\Lambda, \sigma_c)$.  
 The construction of $\sigma_c$ is rather technical, but since we will need the details later, we present it here.
 
 Lemma 6.6 of \cite{kps-twisted} establishes the existence of a subset 
 \[\mathcal{P} \subseteq \{Z(\mu, \nu): s(\mu) = s(\nu)\}\] 
 that partitions $\G_\Lambda$.  In other words, every $a \in \G_\Lambda$ has exactly one representation of the form $a = (\mu_a x, d(\mu_a) -d(\nu_a),  \nu_a x)$ with $Z(\mu_a, \nu_a) \in  \mathcal{P}$.  Note that if $(a,b) \in \G\2_\Lambda$, we need not have $\mu_a = \mu_{ab}$ or $\nu_b = \nu_{ab}$.  However, given $(a,b) \in \G\2_\Lambda$, Lemma 6.3(i) of \cite{kps-twisted} shows that we can always find $y \in \Lambda^\infty$ and $\alpha, \beta, \gamma \in \Lambda$ such that 
 \begin{align*}
 a&= (\mu_a \alpha y, d(\mu_a) - d(\nu_a), \nu_a \alpha y)\\
 b&= (\mu_b \beta y, d(\mu_b) - d(\nu_b), \nu_b \beta y) \\
 ab&= (\mu_{ab} \gamma y, d(\mu_{ab}) - d(\nu_{ab}), \nu_{ab} \gamma y).
 \end{align*}
Then, given a 2-cocycle $c$ on $\Lambda$, we define a 2-cocycle $\sigma_c$ on $\G_\Lambda$ by
\[\sigma_c(a, b) = c(\mu_a, \alpha) c(\mu_b, \beta) c(\nu_{ab}, \gamma) \overline{ c(\nu_a, \alpha) c(\nu_b, \beta) c(\mu_{ab}, \gamma) }.\]
Since $c$ satisfies the cocycle condition \eqref{k-cocycle}, it's straightforward to check that $\sigma_c$ does also.
Lemma 6.3 of \cite{kps-twisted} checks that $\sigma_c$ is well-defined 
and continuous, so we can construct the groupoid $C^*$-algebra $C^*(\G_\Lambda, \sigma_c)$ as outlined above.  Corollary 7.8 of \cite{kps-twisted} tells us that $C^*(\G_\Lambda, \sigma_c) \cong C^*(\Lambda, c)$.
\label{sigma-c}
\end{defn}

Theorem 6.5 of \cite{kps-twisted} establishes that different choices of partitions $\mathcal{P}$ will give rise to cohomologous groupoid cocycles, and hence to isomorphic twisted groupoid $C^*$-algebras. 


\section{Homotopies of Cocycles}
\label{3}

In order to define a homotopy of groupoid 2-cocycles, we begin by observing that, given any locally compact Hausdorff groupoid $\G$, we can make $\G \times [0,1]$ into a LCH groupoid by equipping it with the product topology and setting $(\G \times [0,1])\2 := \G\2 \times [0,1]$.  
In other words,  $(\G \times [0,1])\z = \G\z \times [0,1]$, and 
\[r(\gamma, t) = (r(\gamma), t), \ s(\gamma, t) = (s(\gamma), t).\]
Moreover, if $\G$ has a Haar system $\{\lambda^u\}_{u \in \G\z}$, then setting $\lambda^{u,t} := \lambda^u$ for every $t \in [0,1]$ gives rise to a Haar system on $\G \times [0,1]$.  We will always use this Haar system on $\G \times [0,1]$ in this paper.

\begin{defn}[\cite{transf-gps} Definition 2.12] A \emph{homotopy of (2-)cocycles} on a locallpy compact Hausdorff groupoid $\G$ is a 2-cocycle $\omega \in Z^2(\G \times [0,1], \T)$.  We say that two cocycles $\omega_0, \omega_1 \in Z^2(\G, \T)$ are \emph{homotopic} if there exists a homotopy $\omega \in Z^2(\G \times [0,1], \T)$ such that $\omega_i = \omega|_{\G \times \{i\}}$ for $i = 0,1$.
\label{htopy}
\end{defn}

%

If $\omega$ is a homotopy of cocycles on $\G$ linking $\omega_0, \omega_1$,  Theorem \ref{fiber-alg} below tells us that  $C^*(\G, \omega_0)$ and $C^*(\G, \omega_1)$ are quotients of $C^*(\G \times [0,1], \omega)$.  This 
will be fundamental to the proof of our main result, Theorem \ref{main}. 

%
\begin{defn}[cf. \cite{xprod} Definition C.1]
Let $X$ be a locally compact Hausdorff space.  A $C^*$-algebra $A$ is a \emph{$C_0(X)$-algebra} if we have a $*$-homomorphism $\Phi: C_0(X) \to ZM(A)$ 
such that 
\[ A = \overline{\text{span}} \{ \Phi(f) a: f \in C_0(X), a \in A\} .\]  
We usually write $f \cdot a$ for $\Phi(f)a$.
\end{defn}
If $A$ is a $C_0(X)$-algebra,  then for any $x \in X$, $\overline{\text{span}}  C_0(X \backslash x) \cdot A$ is an ideal $I_x$.  We call $A_x := A/I_x$ the \emph{fiber} of $A$ at $x \in X$.
%

\begin{thm} Let $\omega$ be a homotopy of cocycles on a locally compact Hausdorff groupoid $\G$ with Haar system $\{\lambda^u\}_{u \in \G\z}$. Then $C^*(\G \times [0,1], \omega)$ is a $C([0,1])$-algebra, with fiber  $C^*(\G, \omega_t)$ at $t \in [0,1]$.
\label{fiber-alg}
\end{thm}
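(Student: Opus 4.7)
The plan is to exhibit the $C([0,1])$-structure via pointwise multiplication in the $[0,1]$-variable and to identify the fiber at $t$ through the evaluation map at $t$. For $f \in C([0,1])$ and $g \in C_c(\G \times [0,1])$, set $(\Phi(f) g)(\gamma, s) := f(s) g(\gamma, s)$, and for each $t \in [0,1]$ define $\pi_t \colon C_c(\G \times [0,1]) \to C_c(\G)$ by $\pi_t(g)(\gamma) = g(\gamma, t)$.  The key structural observation is that composable pairs in $\G \times [0,1]$ necessarily share the same $[0,1]$-coordinate, and the Haar system $\lambda^{u,t} = \lambda^u$ is constant in $t$, so both convolution and involution in $C_c(\G \times [0,1], \omega)$ are genuinely ``fiberwise'' over $[0,1]$.

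First I would verify that $\Phi$ takes values in the center of the multiplier algebra: because $f(s)$ passes outside the integral of Definition \ref{conv-alg}, one has $\Phi(f) g *_\omega h = g *_\omega \Phi(f) h = \Phi(f)(g *_\omega h)$, and the $I$-norm bound $\|\Phi(f) g\|_I \leq \|f\|_\infty \|g\|_I$ ensures that $\Phi(f)$ extends to a central multiplier of $C^*(\G \times [0,1], \omega)$.  The spanning condition $\overline{C([0,1]) \cdot A} = A$ then follows because any $g \in C_c(\G \times [0,1])$ can be uniformly approximated on its compact support by finite sums $\sum_i f_i \cdot g_i$ with $f_i \in C([0,1])$ and $g_i \in C_c(\G \times [0,1])$, via a standard partition-of-unity argument over a compact slice of $[0,1]$.

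Next I would show that $\pi_t$ descends to a surjective $*$-homomorphism $C^*(\G \times [0,1], \omega) \to C^*(\G, \omega_t)$.  The computation showing that $\pi_t$ respects convolution and involution is immediate from the fiberwise nature of the operations, and surjectivity at the $C_c$-level is clear.  To obtain the norm bound needed to extend $\pi_t$ to the $C^*$-completions, I would appeal to the universal property: given any $*$-representation $\rho$ of $C^*(\G, \omega_t)$, the composite $\rho \circ \pi_t$ is a $*$-representation of $C_c(\G \times [0,1], \omega)$ bounded in $I$-norm (since clearly $\|\pi_t(g)\|_I \leq \|g\|_I$), and hence extends to $C^*(\G \times [0,1], \omega)$ by Renault's disintegration machinery.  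Taking a supremum over all such $\rho$ gives the desired bound $\|\pi_t(g)\| \leq \|g\|$.

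Finally, I would identify the kernel of the extended $\pi_t$ with the ideal $I_t = \overline{\text{span}}\{\Phi(f) a : f \in C([0,1]),\ f(t) = 0,\ a \in C^*(\G \times [0,1], \omega)\}$.  The inclusion $I_t \subseteq \ker \pi_t$ is immediate from $\pi_t(\Phi(f) a) = f(t) \pi_t(a) = 0$.  The reverse inclusion, which I expect to be the main obstacle, proceeds by approximation: given $g \in C_c(\G \times [0,1])$ with $\pi_t(g) = 0$ and any $\epsilon > 0$, uniform continuity on the compact set $\mathrm{supp}(g)$ yields $\delta > 0$ with $|g(\gamma, s)| < \epsilon$ whenever $|s - t| < \delta$; for $f \in C([0,1])$ with $f(t) = 0$, $f \equiv 1$ on $[0,1] \setminus (t-\delta, t+\delta)$, and $0 \leq f \leq 1$, one has $\Phi(f) g \in I_t$ while $\|g - \Phi(f) g\|_I$ is controlled by $\epsilon$ times a constant depending only on $\mathrm{supp}(g)$ and the Haar system.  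Since the maximal $C^*$-norm is dominated by the $I$-norm, $g$ lies in the closure of $I_t$.  The delicate point throughout is passing from fiberwise estimates at the $C_c$-level to genuine norm control on the maximal completion, which is precisely what the disintegration machinery for representations of locally compact Hausdorff groupoids is designed to handle.
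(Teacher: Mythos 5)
Your construction of the $C([0,1])$-structure, the evaluation maps $\pi_t$, and the inclusion $I_t \subseteq \ker\pi_t$ all match the paper's proof. The gap is in the reverse inclusion. Your approximation argument shows that if $g \in C_c(\G\times[0,1])$ satisfies $\pi_t(g)=0$ \emph{identically as a function}, then $g \in I_t$; this is precisely Lemma \ref{well-def} of the paper. But that only gives $C_c(\G\times[0,1]) \cap \ker\pi_t \subseteq I_t$, and $\ker\pi_t$ is a closed ideal of the completed algebra which need not be the closure of its intersection with $C_c$. A general $a \in \ker\pi_t$ is a limit of $g_n \in C_c$ with $\|\pi_t(g_n)\|_{C^*(\G,\omega_t)} \to 0$ but $\pi_t(g_n) \neq 0$, and there is no evident way to replace $g_n$ by a nearby function vanishing on the fiber: the obvious correction would require lifting $\pi_t(g_n) \in C_c(\G)$ to a function on $\G\times[0,1]$ whose norm in $C^*(\G\times[0,1],\omega)$ is controlled by $\|\pi_t(g_n)\|_{C^*(\G,\omega_t)}$, and controlling the norm of such a lift over the other fibers $C^*(\G,\omega_s)$ is essentially the upper semicontinuity you are trying to prove.

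Put differently, what you have proved is that the assignment $q_t(g) \mapsto g + I_t$ is a \emph{well-defined} map from $C_c(\G,\omega_t)$ to $C^*(\G\times[0,1],\omega)/I_t$; what is needed is that it is \emph{bounded} for the maximal norm on $C^*(\G,\omega_t)$, i.e., $\|g + I_t\| \leq \|q_t(g)\|$, which then extends by continuity to give $\ker q_t \subseteq I_t$. The paper obtains this by taking an arbitrary $I$-norm-bounded representation $L$ killing $I_t$, using your well-definedness step to set $L'(q_t(g)) := L(g)$, and then proving $\|L'(q_t(g))\| \leq \|q_t(g)\|_I$ by a separate argument: one first shows that $s \mapsto \|q_s(g)\|_I$ is continuous, and then squeezes $\|L(\psi\cdot g)\| \leq \sup_s\|q_s(\psi\cdot g)\|_I$ over cutoff functions $\psi$ supported near $t$ with $\psi(t)=1$. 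That continuity lemma and the cutoff estimate are the real content of the hard direction and are absent from your proposal; the closing appeal to ``disintegration machinery'' gives $I$-norm-boundedness of representations but does not supply this step.
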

\begin{proof}
We begin by checking that $C^*(\G \times [0,1], \omega)$ is a $C([0,1])$-algebra.
For $f \in C([0,1]), \ \phi \in C_c(\G \times [0,1], \omega)$, define 
\[f \cdot \phi(a,  t) = f(t) \phi(a,t).\]
It's not difficult to check that this action extends to a $*$-homomorphism 
\[\Phi: C([0,1]) \to ZM(C^*(\G \times [0,1], \omega))\] such that 
$||\Phi(f ) \phi|| \leq ||f||_\infty ||\phi||,$
 or that 
\[\Phi(C([0,1])) \cdot C_c(\G \times [0,1], \omega) = C_c(\G\times [0,1], \omega) \]
is dense in $C^*(\G \times [0,1], \omega)$.
In other words, $\Phi$ makes $C^*(\G \times [0,1], \omega)$ into a $C([0,1])$-algebra as claimed.



Fix $t \in [0,1]$ and denote by $q_t: C_c(G \times [0,1], \omega) \to C_c(G, \omega_t)$  the  evaluation map.  
Then $q_t$ is bounded by the $I$-norm (cf.~\cite{renault} Section II.1), and hence extends to a surjective $*$-homomorphism 
 $ q_t: C^*(\G \times [0,1], \omega) \to C^*(\G, \omega_t)$. 
 In other words, $C^*(\G, \omega_t)$ is a quotient of $C^*(\G \times [0,1], \omega)$.
To see that $C^*(\G, \omega_t) \cong C^*(\G \times [0,1], \omega)_t$, we need to check that $\ker q_t = I_t$.
A standard approximation argument will show that $\ker q_t \supseteq I_t$: 
thus, we will only detail the proof that $\ker q_t \subseteq I_t$.  



%
%
Note that the fiber algebra
$C^*(\G \times [0,1],\omega)_t \cong C^*(\G \times [0,1], \omega)/I_t$ can be calculated as a completion $\overline{C_c(\G \times [0,1], \omega)}$
 with respect to the norm given by
\[\|f\|_t := \sup \{ \|L(f)\|: L(I_t) = 0, \ L\text{ is an $I$-norm-bounded representation} \}.\]
Thus, to show that $\ker q_t \subseteq I_t$, we will show that each  such  representation $L$ factors through $ q_t $.



Given such a representation $L: C_c(\G \times [0,1], \omega) \to B(\H)$, define $L': C_c(\G, \omega_t) \to B(\mathcal{H})$ by $L'(q_t(f)) := L(f).$
We claim that  $L'$ is an $I$-norm-bounded representation of $C_c(\G, \omega_t)$.  To see this, it suffices
to check that $L'$ is well-defined and bounded. 


\begin{lemma}
 If $f, g \in C_c(\G \times [0,1], \omega)$ satisfy $q_t(f) = q_t(g)$, then the function $h = f -g \in C_c(\G  \times [0,1], \omega)$ lies in $I_t$.  Consequently, $L(f) = L(g)$ and $L'$ is well defined on $C_c(\G , \omega_t)$.
 \label{well-def}
\end{lemma}
\begin{proof}
Let $\{f_i\}_{i\in I}$ be an approximate unit for $C_0([0,1]\backslash t)$ such that $f_i(s) \nearrow 1$ for every $s \not= t$, and moreover that for each $i$ there exists $\delta_i > 0$ such that $f_i(s) =1$ if $|s-t| \geq \delta_i$.  We will show that the $I$-norm $\| h - f_i h\|_I \to 0$.  Consequently, $h = \lim_i f_i h $ in $C^*(\G \times [0,1], \omega)$, so $h\in I_t$.

For any $k \in C_c(\G \times [0,1], \omega)$, the axioms of a Haar system tell us that the function $(u, t) \mapsto \int |k(a, t) | d\lambda^{u,t}(a)$ is in $C_0(\G\z \times [0,1]).$ 
  In particular, if we take $k$ to be a function that equals 1 where $h$ is nonzero, and vanishes rapidly off {supp} $h$, this shows us that $\phi(u,t) := \lambda^{u,t}(\text{supp }h) $ is a pointwise limit of functions in $C_0(\G^{(0)}\times [0,1])$, and hence is bounded.  
   Let $K = \max \phi$.

Let $\epsilon > 0$ be given. Since $h$ is compactly supported and $h(a,t) = 0 \ \forall \ a \in \G$, we can choose $\delta >0$ such that $| h(a, s) |< \epsilon/K \ \forall \ a \in \G$ whenever $ |s-t| < \delta$, and choose $j$ such that $i \geq j$ means $\delta_i < \delta$.  Then, if $|s-t| < \delta$,
\begin{align*}
\int_{\G^u}| h(a, s) - f_i(s) h(a,s)| \, d\lambda^u(a) &= (1-f_i(s)) \int_{G^u} |h(a,s)| \, d\lambda^u(a)< 1 \cdot \epsilon.
\end{align*}
On the other hand, if $|s-t| \geq \delta > \delta_i$, then $f_i(s) = 1$ and 
\[\int_{\G^u}| h(a, s) - f_i(s) h(a,s)| \, d\lambda^u(a) = 0\]
for any $u \in \G\z$.  
In either case, given any $\epsilon > 0$ we can always choose $j$ such that $i \geq j$ implies
\begin{align*}
 \| h - f_i h\|_I & = \max \left\{\sup_{s\in [0,1]} \sup_{u \in \G\z} \int_{\G^u} | h(a, s) - f_i(s) h(a, s)| d\lambda^u(a), \right.\\
 & \qquad \qquad \left. \sup_{s \in [0,1]}\sup_{u \in \G\z} \int_{\G^u} | h(a\inv, s) - f_i(s) h(a\inv, s)| d\lambda^u(a)\right\} \\
 &< \epsilon.
 \end{align*}
Since $\| h - f_i h\| \leq \| h - f_i h \|_I$ and $I_t$ is closed, it follows that $h \in I_t$ as desired, and so $L(h) = 0$. \end{proof}

Having seen that $L'$ is well defined, we now proceed to show that it is bounded.

\begin{lemma}
For any fixed $f \in C_c(\G \times [0,1], \omega)$, the map $s \mapsto \|q_s(f)\|_I$ is continuous.
\end{lemma}
\begin{proof}
Fix $f \in C_c(\G \times [0,1], \omega)$ and fix $t \in [0,1]$.
As in the proof of Lemma \ref{well-def}, let $K$ denote the supremum of the function $(u,s) \mapsto \lambda^{u,s}(\text{supp } f).$
Since $f$ has compact support, given $\epsilon > 0$ we can choose $\delta$ such that 
\[|s-t| < \delta \Rightarrow | f(a,t) - f(a,s)| < \frac{\epsilon}{2K} \ \forall \ a \in \G.\]
Now, by definition of the $I$-norm, there exists $u \in \G\z$ such that either
\begin{align*}
\| q_s(f)\|_I &< \int_{\G^u} |f(a,s)|\, d\lambda^u(a) + \frac{\epsilon}{2},
\text{ or }  \| q_s(f)\|_I < \int_{\G^u} |f(a\inv,s)|\, d\lambda^u(a) + \frac{\epsilon}{2}.
\end{align*}
It follows that either
\begin{align*}
\| q_s(f)\|_I &< \int_{\G^u} |f(a, t)| + \frac{\epsilon}{2K}\, d\lambda^u(a) + \frac{\epsilon}{2}\leq \int_{\G^u} |f(a,t)| \, d\lambda^u(a) + \epsilon, \\
\text{ or} \quad \| q_s(f)\|_I &< \int_{\G^u} |f(a\inv, t)| + \frac{\epsilon}{2K}\, d\lambda^u(a) + \frac{\epsilon}{2}\leq \int_{\G^u} |f(a\inv,t)| \, d\lambda^u(a) + \epsilon.
\end{align*}
Thus,
\begin{align*}
\|q_s(f)\|_I &< \max \left \{ \int_{\G^u} |f(a,t)| \, d\lambda^u(a) , \int_{\G^u} |f(a\inv,t)| \, d\lambda^u(a) \right\} + \epsilon  \\
& \leq  \max \left \{\sup_{u\in \G\z}  \int_{\G^u} |f(a,t)| \, d\lambda^u(a) , \sup_{u\in \G\z}  \int_{\G^u} |f(a\inv,t)| \, d\lambda^u(a) \right\} + \epsilon \\
& = \|q_t(f)\|_I + \epsilon
\end{align*}
if $|s-t| < \delta$.  Reversing the roles of $s$ and $t$ in the above argument tells us that 
\[|s-t| < \delta \Rightarrow \left| \| q_s(f)\|_I - \|q_t(f)\|_I \right| < \epsilon,\]
as desired. \end{proof}

Now we can complete the proof of Theorem \ref{fiber-alg}. 
Set $S_t = \{\psi \in C\left([0,1]\right) : \psi(t) = 1\}$;  for any $\psi \in S_t$ and any $f \in C_c(\G \times [0,1], \omega)$, we have
\begin{equation*}
||L(\psi \cdot f) ||= ||L'(q_t (\psi \cdot f)) || = ||L'(q_t(f))||.
\end{equation*}
Consequently, 
\begin{align*}
||L'(q_t(f))|| = \inf_{\psi \in S} ||L(\psi \cdot f) || &\leq \inf_{\psi} ||\psi \cdot f||_{I} \\
&= \inf_\psi \max \left\{\sup_{s \in [0,1]} \sup_{u \in \G^{(0)}} \int | \psi(s) f(a,s)|  \, d \lambda^u (a), \right. \\
&\qquad \left. \sup_{s\in[0,1]}\sup_{u\in \G\z} \int | \psi(s) f(a\inv, s)| d\lambda^u(a) \right\} \\
&= \inf_\psi \sup_{s \in [0,1]} ||q_s( \psi \cdot f)||_{I}.
\end{align*}

Let $\epsilon >0$ be given.  Choose $\delta$ such that  $|s-t| < \delta \Rightarrow \left| \, \|q_s(f)\|_{I} - \|q_t(f)\|_{I} \right|< \epsilon;$
 choose  $\psi_\epsilon \in C([0,1])$ such that $\psi_\epsilon(t) = 1$ and $|s-t| \geq \delta \Rightarrow \psi_\epsilon(s) = 0.$   Then, since $\psi_\epsilon \in S_t$, 
\begin{equation}
 ||q_s(\psi_\epsilon \cdot f)||_{I}  = \psi_\epsilon(s) ||q_s(f)||_{I} < \psi_\epsilon(s) \left( ||q_t(f)||_{I} + \epsilon\right) \leq  ||q_t(f)||_{I} + \epsilon
 \label{1.5}
\end{equation}
if $|s-t| < \delta$; otherwise we have $||q_s(\psi_\epsilon \cdot f)||_{I} = 0$, and  \eqref{1.5} still holds.

Since we can find such a $\psi_\epsilon$ for any $\epsilon > 0$, it follows that
\begin{align*}
||L'(q_t(f))|| & \leq \inf_{\psi \in S_t} \sup_{s\in [0,1]} ||q_s( \psi \cdot f)||_{I} \leq \inf_\epsilon \sup_s ||q_s(\psi_\epsilon \cdot f) ||_{I} \\
& \leq \inf_\epsilon ||q_t(f)||_{I} + \epsilon \\
& = ||q_t(f)||_{I}.
\end{align*}
The fact that $q_t$ is onto now tells us that $L'$ is a bounded representation of $C_c(\G, \omega_t)$ as claimed.  In other words, every representation $L$ of $C_c(\G \times [0,1], \omega)$ that kills $I_t$ also factors through $q_t$, so $\ker q_t \subseteq I_t$.  
This completes the proof that the fiber algebra $C^*(\G \times [0,1], \omega)/I_t$ of the $C([0,1])$-algebra $C^*(\G \times [0,1], \omega)$ is simply $C^*(\G, \omega_t)$. \end{proof}

In order to apply Theorem \ref{fiber-alg} to a homotopy of cocycles on a $k$-graph, we first need to define such a homotopy.  Unlike for groupoids, there is no obvious way to make $\Lambda \times [0,1]$ into a higher-rank graph,  so our definition of a homotopy of $k$-graph cocycles will look rather different than Definition \ref{htopy} above.  However, Proposition \ref{2.1} below will show that the two definitions are compatible.

\begin{defn}
\label{cocycle-htopy}
Let $\Lambda$ be a $k$-graph. A family $\{c_t\}_{t \in [0,1]}$ of 2-cocycles in $\underline{Z}^2(\Lambda, \T)$ is a \emph{homotopy of (2-)cocycles} on $\Lambda$ if for each pair $(\lambda, \mu) \in \Lambda^{*2}$ the function 
$t \mapsto c_t(\lambda, \mu) \in \T$
is continuous.
\end{defn} 

\begin{defn}
Let $\{c_t\}_{t \in [0,1]}$ be a homotopy of cocycles on a $k$-graph $\Lambda$.  Define $\omega \in Z^2(\G_\Lambda \times [0,1], \T)$ by 
\[\omega\left( (a, t), (b,t) \right) = \sigma_{c_t}(a,b),\]
where $\sigma_{c_t}$ is the cocycle on $\G_\Lambda$ associated to $c_t$ as in Definition \ref{sigma-c}.  
\label{gpoid-htopy}
\end{defn}

A moment's thought will reveal that $\omega$ satisfies the cocycle condition \eqref{k-cocycle}, since each $\sigma_{c_t}$ is a cocycle.  Thus, in order to see that $\omega$ is a homotopy of cocycles on $\G_\Lambda$, we merely need to check that $\omega: (\G_\Lambda \times [0,1])\2 \to \T$ is continuous.

\begin{prop}
The cocycle $\omega$ described in Definition \ref{gpoid-htopy} is continuous, and hence is a  homotopy of groupoid cocycles on $\G_\Lambda$. 
\label{2.1}
\end{prop}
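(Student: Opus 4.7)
The plan is to prove continuity of $\omega$ pointwise, at an arbitrary $((a,t_0),(b,t_0)) \in (\G_\Lambda \times [0,1])\2$.  The strategy is to produce an open neighborhood of $(a,b)$ in $\G_\Lambda\2$ on which the Lemma 6.3 decomposition of \cite{kps-twisted} can be taken with the \emph{same} finite paths (only the common infinite tail varying), so that the formula for $\sigma_{c_t}(a',b')$ reduces to a product of continuous functions of $t$ alone and is constant in $(a',b')$.

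To set up, I fix data $y \in \Lambda^\infty$ and $\alpha,\beta,\gamma \in \Lambda$ realizing the Definition \ref{sigma-c} decomposition for $(a,b)$.  Using the $k$-graph factorization property and extending the common tail $y$ (together with $\alpha,\beta,\gamma$) if necessary, I may assume $\nu_a\alpha = \mu_b\beta$, $\mu_a\alpha = \mu_{ab}\gamma$, and $\nu_b\beta = \nu_{ab}\gamma$ as finite paths in $\Lambda$.  I then consider the basic open neighborhood
\[W := \bigl(Z(\mu_a\alpha,\nu_a\alpha) \times Z(\mu_b\beta,\nu_b\beta)\bigr) \cap \G_\Lambda\2\]
of $(a,b)$.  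Any $(a',b') \in W$ has the form $a' = (\mu_a\alpha x', d(\mu_a)-d(\nu_a), \nu_a\alpha x')$ and $b' = (\mu_b\beta y', d(\mu_b)-d(\nu_b), \nu_b\beta y')$ for some $x',y' \in \Lambda^\infty$; composability together with $\nu_a\alpha = \mu_b\beta$ forces $x' = y'$.  A short computation then yields $a'b' = (\mu_{ab}\gamma y',\,\cdot\,,\nu_{ab}\gamma y') \in Z(\mu_{ab},\nu_{ab})$, so the element of $\mathcal{P}$ containing $ab$ also contains $a'b'$.  Hence the partition data supplied by $\mathcal{P}$ is uniform on $W$, and the same difference paths $(\alpha,\beta,\gamma)$ give a valid Lemma 6.3 decomposition for $(a',b')$ with new common tail $y'$.

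It follows that for every $(a',b',t) \in W \times [0,1]$,
\[\omega\bigl((a',t),(b',t)\bigr) = c_t(\mu_a,\alpha)\, c_t(\mu_b,\beta)\, c_t(\nu_{ab},\gamma)\, \overline{c_t(\nu_a,\alpha)\, c_t(\nu_b,\beta)\, c_t(\mu_{ab},\gamma)},\]
which is independent of $(a',b') \in W$ and continuous in $t$ by Definition \ref{cocycle-htopy}.  Thus $\omega$ is continuous at $((a,t_0),(b,t_0))$, and since the base point was arbitrary, $\omega$ is continuous on $(\G_\Lambda \times [0,1])\2$.  The delicate step is the local-constancy assertion for $(\alpha,\beta,\gamma)$: it rests on the $k$-graph factorization property (which lets one pull the fixed prefixes $\mu_a\alpha$, $\nu_a\alpha$, $\mu_b\beta$, $\nu_b\beta$ out of infinite paths lying near $y$) together with the fact that $\G_\Lambda$ is ample \'etale with basis of clopen cylinder sets $Z(\mu,\nu)$, which together ensure that all relevant partition and decomposition choices can be made uniformly on $W$.
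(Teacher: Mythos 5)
Your proof is correct and follows essentially the same route as the paper's: both arguments show that on a basic open neighborhood of $(a,b)$ in $\G_\Lambda\2$ the Lemma 6.3 decomposition can be taken with the same finite paths $\alpha,\beta,\gamma$ (using $\nu_a\alpha=\mu_b\beta$, etc., to identify the tails), so that $\sigma_{c_t}$ is locally constant in the groupoid variable and the continuity of $\omega$ reduces to the continuity of $t\mapsto c_t(\mu,\nu)$. The only difference is cosmetic — you phrase the local constancy via open cylinder sets while the paper phrases it via eventual equality along convergent nets.
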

\begin{proof}
We will show that if $\{(a_i, b_i, t_i)\}_{i \in I} \subseteq \G\2_\Lambda \times [0,1]$ is a net which converges to $(a,b,t)$, then 
\begin{equation}
\omega\left( (a_i, t_i), (b_i, t_i) \right) := \sigma_{c_{t_i}}(a_i, b_i) = \sigma_{c_{t_i}}(a,b)
\label{idk}
\end{equation}
for large enough $i$.
Recall from Definition \ref{sigma-c} that $\sigma_{c_{t_i}}(a,b)$ is a finite product of terms of the form $c_{t_i}(\mu, \nu)$ and their inverses, where the elements $\mu, \nu$ depend only on the elements $a, b$ and on the choice of partition $\mathcal{P}$ of $\G_\Lambda$ -- but not on the 2-cocycle $c_{t_i}$.  Thus,  Equation \eqref{idk}, and the continuity of the maps $t \mapsto c_t(\mu, \nu)$, will imply that $\omega\left( (a_i, t_i), (b_i, t_i) \right) \to \sigma_{c_t}(a,b) = \omega\left( (a, t), (b,t) \right).$

In what follows, we will use the notation of Definition \ref{sigma-c}.  
If $(a_i, b_i ,t_i) \to (a,b,t)$, then for large enough $i$ we have $a_i \in Z(\mu_a, \nu_a)$, $b_i \in Z(\mu_b, \nu_b)$, and  
  $a_i b_i \in Z(\mu_{ab}, \nu_{ab})$ as well.  In other words, we can write 
\begin{align*}
a& = (\mu_a \alpha y,  d(\mu_a) - d(\nu_a), \nu_a \alpha y), \quad a_i  = (\mu_a \alpha_i y_i, d(\mu_a) - d(\nu_a), \nu_a \alpha_i y_i) \\ 
b&= (\mu_b \beta y, d(\mu_b) - d(\nu_b),  \nu_b \beta y) , \quad 
b_i = (\mu_b \beta_i y_i, d(\mu_b) - d(\nu_b), \nu_b \beta_i y_i)\\
ab &= (\mu_{ab} \gamma y,d(\mu_{ab}) - d(\nu_{ab}), \nu_{ab} \gamma y) , \quad 
a_ib_i = (\mu_{ab} \gamma_i y_i, d(\mu_{ab}) - d(\nu_{ab}),\nu_{ab} \gamma_i y_i)
\end{align*}
for some $\alpha, \beta, \gamma, \alpha_i, \beta_i , \gamma_i \in \Lambda$ and $y, y_i \in \Lambda^\infty$.

Since $a_i \to a$ we must also have $\alpha_i y_i \to \alpha y$ in $\Lambda^\infty$. Thus, for large enough $i$, $\alpha_i y_i \in Z(\alpha): = \{\alpha y: y \in \Lambda^\infty,\, y(0) = s(\alpha)\}$ (cf.~Proposition 2.8 of \cite{kp}).
It follows that  
 \begin{align*}
 a_i= (\mu_a \alpha y_i', d(\mu_{a}) - d(\nu_{a}), \nu_a \alpha y_i'), &\quad 
 b_i = (\mu_b \beta z_i', d(\mu_{b}) - d(\nu_{b}),\nu_b \beta z_i') \\
 a_ib_i = (\mu_{ab} \gamma w_i', d(\mu_{ab}) & - d(\nu_{ab}),\nu_{ab} \gamma w_i'),
 \end{align*}
where (since each pair $(a_i, b_i) \in \G_{\Lambda}\2$ by hypothesis)
\[\nu_a \alpha y_i' =  \mu_b \beta z_i'; \quad \mu_a \alpha y_i' = \mu_{ab} \gamma w_i'; \quad \nu_b\beta z_i' = \nu_{ab} \gamma w_i'.\]

Now, 
$\nu_a \alpha = \mu_b \beta$ by \cite{kps-twisted} Lemma 6.3, and thus $y_i' = z_i'$.  A similar argument gives $z_i' = w_i'$ as well, so $y_i' = z_i' = w_i'$. 
In other words, 
for large enough $i$, 
\begin{align*}
\sigma_{c_{t_i}}(a_i, b_i) &= c_{t_i}(\mu_a, \alpha) c_{t_i}(\mu_b, \beta) c_{t_i}(\nu_{ab}, \gamma) \overline{ c_{t_i}(\nu_a, \alpha) c_{t_i}(\nu_b, \beta) c_{t_i}(\mu_{ab}, \gamma) } \\
&= \sigma_{c_{t_i}}(a,b),
\end{align*}
as claimed.  As observed in the first paragraph of the proof, it now follows that $\omega$ is a homotopy of cocycles on $\G_\Lambda$ as desired. \end{proof}

\begin{cor} Let $\{c_t\}$ be a homotopy of cocycles on a $k$-graph $\Lambda$, and define a cocycle $\omega$ on $\G_\Lambda \times [0,1]$ as in Definition \ref{gpoid-htopy}.  Then $C^*(\G_\Lambda \times [0,1], \omega)$ is a $C([0,1])$-algebra with fiber algebra $C^*(\G_\Lambda, \sigma_{c_t}) \cong C^*(\Lambda, c_t)$ at $t \in [0,1]$. 
 \end{cor}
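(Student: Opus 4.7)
The plan is to assemble this corollary from three ingredients already established in the paper, so there is no substantial obstacle to overcome — the work has all been done.

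First, I would invoke Proposition \ref{2.1} to conclude that $\omega$, as constructed in Definition \ref{gpoid-htopy}, is a continuous $\T$-valued 2-cocycle on $\G_\Lambda \times [0,1]$. In the language of Definition \ref{htopy}, this is precisely a homotopy of 2-cocycles on the locally compact Hausdorff groupoid $\G_\Lambda$ (recall that $\G_\Lambda$ is a locally compact Hausdorff, in fact ample \'etale, groupoid by Proposition 2.8 of \cite{kp}, and that we equipped it with the Haar system of counting measures described earlier in Section \ref{groupoids}).

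Next, I would apply Theorem \ref{fiber-alg} with $\G = \G_\Lambda$. This immediately produces a $C([0,1])$-structure on $C^*(\G_\Lambda \times [0,1], \omega)$ and identifies the fiber at $t \in [0,1]$ as $C^*(\G_\Lambda, \omega_t)$. By the very definition of $\omega$ in Definition \ref{gpoid-htopy}, the restriction $\omega_t = \omega|_{\G_\Lambda \times \{t\}}$ is the groupoid cocycle $\sigma_{c_t}$, so the fiber at $t$ is $C^*(\G_\Lambda, \sigma_{c_t})$.

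Finally, I would invoke Corollary 7.8 of \cite{kps-twisted} (as recalled at the end of Definition \ref{sigma-c}) to identify $C^*(\G_\Lambda, \sigma_{c_t}) \cong C^*(\Lambda, c_t)$ for each $t$. Chaining these isomorphisms together yields the statement. The only thing worth a brief comment is that the identification $C^*(\G_\Lambda, \sigma_{c_t}) \cong C^*(\Lambda, c_t)$ holds for every $t$ because the construction of $\sigma_{c_t}$ in Definition \ref{sigma-c} is carried out pointwise in $t$ using the same partition $\mathcal{P}$; hence the hypotheses of \cite{kps-twisted} Corollary 7.8 are met uniformly across $t \in [0,1]$.
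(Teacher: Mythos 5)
Your proposal is correct and follows exactly the same route as the paper's own proof: Proposition \ref{2.1} shows $\omega$ is a homotopy of groupoid cocycles, Theorem \ref{fiber-alg} supplies the $C([0,1])$-structure with fibers $C^*(\G_\Lambda, \sigma_{c_t})$, and Corollary 7.8 of \cite{kps-twisted} gives the final isomorphism. No gaps.
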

\begin{proof}
 Proposition \ref{2.1} tells us that $\omega$ is a homotopy of cocycles on $\G_\Lambda$, and Theorem \ref{fiber-alg} tells us that the fiber over $t \in [0,1]$ of the $C([0,1])$-algebra $C^*(\G_\Lambda \times [0,1], \omega)$  is  $C^*(\G_\Lambda, \sigma_{c_t}) $.  The final isomorphism is provided by Corollary 7.8 of \cite{kps-twisted}. \end{proof}
 
\section{The main theorem}
Our goal in this section is to prove the following:
\label{final}

\begin{thm}
Let $\Lambda$ be a row-finite $k$-graph with no sources and let $\{c_t\}_{t \in [0,1]}$ be a homotopy of cocycles on $\Lambda$.  Then 
\[K_*(C^*(\Lambda, c_0)) \cong K_*(C^*(\Lambda, c_1)).\]
Moreover, this isomorphism preserves the $K$-theory class of the vertex projection $s_v$ for each $v \in \text{Obj}(\Lambda)$.
\label{main}
\end{thm}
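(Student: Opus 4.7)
My strategy is to exploit the $C([0,1])$-algebra structure of $A := C^*(\G_\Lambda \times [0,1], \omega)$ established in the preceding corollary, where $\omega$ is the cocycle on $\G_\Lambda \times [0,1]$ built from the homotopy $\{c_t\}$. If $A$ were a trivial continuous field, i.e.\ isomorphic to $C([0,1]) \otimes C^*(\Lambda, c_0)$ with the evaluation $q_t$ corresponding to the fibre $C^*(\Lambda, c_t)$, then $q_0$ and $q_1$ would both be homotopy equivalences of $C^*$-algebras and the desired $K$-theory isomorphism (together with the preservation of $[s_v]$) would be immediate. According to Proposition \ref{trivial}, this is precisely what happens whenever the degree functor $d$ on $\Lambda$ can be written as a coboundary $d(\lambda) = b(s(\lambda)) - b(r(\lambda))$ for some $b:\mathrm{Obj}(\Lambda) \to \Z^k$.

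Since a general $k$-graph $\Lambda$ need not admit such a $b$, the main task of Section \ref{5} is to reduce to the case handled by Proposition \ref{trivial}. The natural device is to pass to the skew-product $k$-graph $\Lambda \times_d \Z^k$, whose degree functor is tautologically a coboundary and to which the homotopy $\{c_t\}$ lifts to a homotopy $\{\tilde c_t\}$ of 2-cocycles. Applying Proposition \ref{trivial} (via the preceding corollary, one more time) to this skew-product yields an isomorphism
\[ K_*(C^*(\Lambda \times_d \Z^k, \tilde c_0)) \cong K_*(C^*(\Lambda \times_d \Z^k, \tilde c_1)) \]
that preserves the classes of the skew-product vertex projections.

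To push the isomorphism down to $\Lambda$, I would identify $C^*(\Lambda \times_d \Z^k, \tilde c_t)$ with the crossed product $C^*(\Lambda, c_t) \rtimes_{\gamma^{(t)}} \T^k$ by the gauge action $\gamma^{(t)}$, following the well-known dualities between skew products and crossed products in the $k$-graph setting. An iterated Pimsner--Voiculescu sequence (equivalently $k$-fold Connes--Thom/Takai duality) then relates $K_*(C^*(\Lambda, c_t))$ to $K_*(C^*(\Lambda \times_d \Z^k, \tilde c_t))$ at each endpoint. Checking that these identifications intertwine the evaluation maps $q_0, q_1$ coming from $A$, one obtains the desired isomorphism $K_*(C^*(\Lambda, c_0)) \cong K_*(C^*(\Lambda, c_1))$, and tracking the images of the skew-product vertex projections through the dualities shows that it sends $[s_v]$ at $t=0$ to $[s_v]$ at $t=1$.

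The hard part is the naturality of this last step. One obtains six-term exact sequences at $t=0$ and $t=1$ that must be linked by a single homotopy-invariant morphism, so every identification in sight (skew-product versus crossed product, Takai duality, evaluation at $t$, the triviality isomorphism furnished by Proposition \ref{trivial}) must fit into a commutative diagram amenable to a five-lemma/diagram-chase argument. This is essentially the bookkeeping carried out in Section 5 of \cite{kps-kthy} for the special case in which $c_1$ is the trivial cocycle; I expect their argument to adapt verbatim, with the only genuinely new subtlety being that both endpoints $c_0$ and $c_1$ now carry non-trivial cocycles and the triviality of the continuous field is no longer forced on us by a distinguished trivial endpoint.
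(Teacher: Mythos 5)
Your first half --- pass to the skew product $\Lambda \times_d \Z^k$, observe that its degree functor is a coboundary, and apply Proposition \ref{trivial} to trivialize the continuous field over the skew product --- is exactly the paper's strategy. The gap is in the descent back to $\Lambda$. You propose to compare the two endpoints by running an iterated Pimsner--Voiculescu (or Takai duality) argument at $t=0$ and at $t=1$ separately and then linking the two six-term sequences by a five-lemma chase; but a five-lemma argument needs a single morphism connecting the $t=0$ and $t=1$ data at the level of $\Z^k$- (dually $\T^k$-) algebras, and you have not produced one. The obvious candidate --- the isomorphism $C^*(\Lambda \times_d \Z^k, c_0\circ\phi)\cong C^*(\Lambda \times_d \Z^k, c_1\circ\phi)$ furnished by the trivialization --- is \emph{not} equivariant for the translation/gauge actions; the paper's closing Remark makes precisely this point (this is why not all twisted $k$-graph algebras are Morita equivalent, e.g.\ the rotation algebras). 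So ``their argument adapts verbatim'' is exactly the step that cannot be waved through.

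The paper's fix, which your sketch is missing, is to work with a single global intermediary: one defines a $\Z^k$-action $\beta$ on all of $C^*(\G_{\Lambda\times_d\Z^k}\times[0,1],\omega)$ by transporting the translation action $id\otimes\alpha$ through the trivialization $\Phi$ of Remark \ref{phi} (there is no naive formula for $\beta$ on $C_c$; it genuinely uses $\Phi$, i.e.\ the functions $\kappa_t$), forms the crossed product $C^*(\G_{\Lambda\times_d\Z^k}\times[0,1],\omega)\rtimes_\beta\Z^k$, and checks that each evaluation map $q_t$ is $\beta$--$\beta_t$ equivariant and a homotopy equivalence (Remark \ref{htopy-equiv}). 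Theorem 5.1 of \cite{kps-kthy} then gives $K_*$ of the global crossed product $\cong K_*$ of each fiber's crossed product, and both endpoints are compared \emph{through} this one object rather than to each other directly. One must still verify, by the explicit $\kappa_t$-computation, that the induced fiberwise action $\gamma_t$ on $C^*(\Lambda\times_d\Z^k, c_t\circ\phi)$ really is left translation, so that the Morita equivalence $C^*(\Lambda\times_d\Z^k, c_t\circ\phi)\rtimes_{lt}\Z^k\sim_{ME} C^*(\Lambda,c_t)$ of \cite{kps-kthy} Lemma 5.2 applies; this verification, and the construction of $\beta$, are the substantive content your outline defers. The tracking of $[s_v]$ then goes through as you expect, since $\kappa_t$ is trivial on vertices.
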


We begin by proving a stronger version of Theorem \ref{main} in the simpler case when the degree functor $d$ satisfies $d(\lambda) =  \delta b(\lambda):= b(s(\lambda)) - b(r(\lambda))$
 for some function $b: \text{Obj}(\Lambda) \to \Z^k$; this is Proposition \ref{trivial} below.  We then combine Proposition \ref{trivial} with techniques from \cite{kps-kthy} to prove Theorem \ref{main} in full generality.

\subsection{The AF case} 
 \label{4}

 If $(\Lambda, d)$ is a $k$-graph such that $d = \delta b$, then Lemma 8.4 of \cite{kps-twisted} tells us that $C^*(\Lambda, c)$ and $C^*(\Lambda)$ are both AF-algebras, with the same approximating subalgebras and multiplicities of partial inclusions.  Consequently, $C^*(\Lambda, c) \cong C^*(\Lambda)$.  In order to fix notation for what follows, we describe this isomorphism in some detail.
 
Lemma 3.1 of \cite{kp} shows that if $\Lambda$ is a row-finite, source-free $k$-graph, then  $\{s_\lambda s_\mu^*: s(\lambda) = s(\mu)\}$ spans a dense $*$-subalgebra of $C^*(\Lambda)$. Moreover, when $d = \delta b$, Lemma 5.4 of \cite{kp} tells us that 
$\{s_\lambda s_\mu^*: b(s(\lambda)) = b(s(\mu)) = n\}$
 forms a collection of matrix units for the  subalgebra 
\[A_n = \overline{\text{span}}\{s_\lambda s_\mu^*: b(s(\lambda)) = b(s(\mu)) = n\}\cong \bigoplus_{b(v) = n} \K(\ell^2(s\inv(v))).\]
Observe that we can think of $A_n$ as a subalgebra of $C^*(\Lambda)$ or of $C^*(\Lambda, c)$.  In fact, these subalgebras allow us to exhibit $C^*(\Lambda, c)$ and $C^*(\Lambda)$ as AF algebras:
\[C^*(\Lambda, c) = \varinjlim (A_n, \phi_{m,n}^c) \text{ and } C^*(\Lambda) = \varinjlim (A_n, \phi_{m,n}),\]
where the connecting maps $\phi_{m,n}, \phi_{m,n}^c: A_n \to A_m$ are given by
  \begin{align*} \phi^c_{m, n}(s_\lambda s_\mu^*) &= \sum_{r(\alpha) = s(\lambda), b(s(\alpha)) = m} c(\lambda, \alpha) \overline{c(\mu, \alpha)} s_{\lambda \alpha}s_{\mu \alpha}^*\\
\phi_{m,n}(s_{\lambda}s_\mu^*) &= \sum_{r(\alpha) = s(\lambda), b(s(\alpha)) = m} s_{\lambda \alpha}s^*_{\mu \alpha}.\end{align*}
%


We can now describe explicitly the isomorphism $C^*(\Lambda, c) \cong C^*(\Lambda)$.
 As in Theorem 4.2 of \cite{kps-kthy}, write  $\1$ for $(1, \ldots, 1) \in \N^k$, and define $\kappa: \Lambda \to \T$   by 
\[\kappa(\lambda) = \left\{ \begin{array}{cl} 1, & d(\lambda) \not\geq \1 \\ \kappa(\mu)c(\mu, \alpha), & d(\alpha) = \1 \text{ and } \lambda = \mu \alpha.\end{array}\right.\]
For $n \in \Z^k$, let $U_n = \sum_{b(s(\lambda)) = n} \kappa(\lambda) s_\lambda s_\lambda^* \in U(M(A_n)).$ 
A quick computation will show that for any $\lambda, \mu$ with $s_\lambda s_\mu^* \in A_n$,
\begin{equation}\Ad U_n(s_\lambda s_\mu^*) = \kappa(\lambda)\overline{ \kappa(\mu)} s_\lambda s_\mu^*.\label{U}
\end{equation} 
 Moreover, the factorization property tells us that for any $h \in \Z$,  
\[\phi^{c}_{(h+1)\1, h\1} \circ  \Ad U_{h\1} =\Ad U_{(h+1)\1} \circ \phi_{(h+1)\1, h\1} .\]
In other words,  $\Ad U_*$ intertwines the connecting maps $\phi_{m,n}^c, \phi_{m,n}$, and hence implements the isomorphism $C^*(\Lambda) \to C^*(\Lambda, c)$.
 
 We can now use this isomorphism to prove that a homotopy of cocycles on $\Lambda$ gives rise to a trivial continuous field when $d = \delta b$:
 
\begin{prop}
Let $(\Lambda,d)$ be a row-finite, source-free $k$-graph such that $d = \delta b$ for some function $b: \text{Obj}(\Lambda) \to \Z^k$; let $\{c_t\}_{t \in [0,1]}$  be a homotopy of cocycles on $\Lambda$; and let $\omega$ be the cocycle on $\G_\Lambda \times [0,1]$ associated to $\{c_t\}_{t \in [0,1]}$ as in Definition \ref{gpoid-htopy}.  We have an isomorphism of $C([0,1])$-algebras
\[C^*(\G_\Lambda \times [0,1], \omega) \cong C^*(\G_\Lambda\times [0,1])  \cong C([0,1]) \otimes C^*(\Lambda).
\]
\label{trivial}
\end{prop}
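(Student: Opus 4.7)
The plan is to establish the two isomorphisms in turn. The second, $C^*(\G_\Lambda \times [0,1]) \cong C([0,1]) \otimes C^*(\Lambda)$, reduces to standard facts: for any locally compact Hausdorff groupoid $\G$ with Haar system, equipping $\G \times [0,1]$ with the product topology and the obvious Haar system gives $C^*(\G \times [0,1]) \cong C([0,1]) \otimes C^*(\G)$, and the remaining identification $C^*(\G_\Lambda) \cong C^*(\Lambda)$ is Corollary 7.8 of \cite{kps-twisted}.

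For the main isomorphism $C^*(\G_\Lambda \times [0,1], \omega) \cong C^*(\G_\Lambda \times [0,1])$, I would lift the AF-inductive-limit argument recalled just before the proposition to the continuous-field setting. For each $n$ in the image of $b$, define $\mathcal{A}_n \subset C^*(\G_\Lambda \times [0,1], \omega)$ as the $C([0,1])$-submodule generated by the elements $s_\lambda s_\mu^*$ (interpreted fiberwise via Corollary 7.8 of \cite{kps-twisted}) with $b(s(\lambda)) = b(s(\mu)) = n$.  Because multiplication among these matrix units reduces to $\delta_{\mu,\sigma}\, s_\lambda s_\tau^*$ without any cocycle correction, the fiberwise identification $\mathcal{A}_n|_t \cong A_n$ from the excerpt assembles into an isomorphism $\mathcal{A}_n \cong C([0,1]) \otimes A_n$ of $C([0,1])$-algebras.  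Combining this with Theorem \ref{fiber-alg} and a density argument then identifies $C^*(\G_\Lambda \times [0,1], \omega)$ with the $C([0,1])$-algebraic inductive limit $\varinjlim (\mathcal{A}_{h\1}, \Phi^\omega_{(h+1)\1, h\1})$, where $\Phi^\omega_{m,n}$ acts at each $t \in [0,1]$ by the twisted connecting map $\phi^{c_t}_{m,n}$.

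Next, I would promote the function $\kappa$ from the excerpt to a continuous family $\{\kappa_t\}_{t \in [0,1]}$ by using $c_t$ in place of $c$ in the defining recursion.  Since $t \mapsto c_t(\lambda, \mu)$ is continuous by Definition \ref{cocycle-htopy}, an induction on $d(\lambda)$ shows that $t \mapsto \kappa_t(\lambda)$ is continuous for every $\lambda \in \Lambda$, so the formal expression
\[\tilde{U}_n = \sum_{b(s(\lambda)) = n} \kappa_{(\cdot)}(\lambda)\, s_\lambda s_\lambda^*\]
defines an element of $U(M(\mathcal{A}_n))$ whose fiber at $t$ is the unitary $U_n^{c_t}$ of the excerpt.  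Applying the pointwise identity $\Ad U_n^{c_t}(s_\lambda s_\mu^*) = \kappa_t(\lambda)\overline{\kappa_t(\mu)} s_\lambda s_\mu^*$ and the intertwining relation $\phi_{(h+1)\1,h\1}^{c_t} \circ \Ad U_{h\1}^{c_t} = \Ad U_{(h+1)\1}^{c_t} \circ \phi_{(h+1)\1,h\1}$ recorded in the excerpt, one obtains the $C([0,1])$-linear intertwiner $\Phi^\omega_{(h+1)\1, h\1} \circ \Ad \tilde{U}_{h\1} = \Ad \tilde{U}_{(h+1)\1} \circ (\mathrm{id} \otimes \phi_{(h+1)\1, h\1})$.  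Taking the inductive limit then delivers a $C([0,1])$-linear isomorphism $C([0,1]) \otimes C^*(\Lambda) \to C^*(\G_\Lambda \times [0,1], \omega)$, as required.

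The main technical hurdle I expect is step two: verifying that $\bigcup_n \mathcal{A}_n$ really is dense in $C^*(\G_\Lambda \times [0,1], \omega)$ and that the connecting maps $\Phi^\omega_{m,n}$ glue the fiberwise decompositions into a bona fide inductive limit of $C([0,1])$-algebras.  Theorem \ref{fiber-alg} trivialises the behaviour on each fiber, so what remains is essentially a continuity argument, but care is needed to ensure that continuous sections of the fiberwise AF decompositions exhaust the full twisted algebra rather than just each fiber.
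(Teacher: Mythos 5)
Your argument is essentially correct, and it rests on the same two pillars as the paper's own proof --- the continuity of $t \mapsto \kappa_t(\lambda)$ (which, as you say, follows by induction on $d(\lambda)$ from Definition \ref{cocycle-htopy}) and the fiberwise relations $\Ad U^t_n(s_\lambda s_\mu^*) = \kappa_t(\lambda)\overline{\kappa_t(\mu)}s_\lambda s_\mu^*$ and $\phi^{c_t}_{(h+1)\1,h\1}\circ \Ad U^t_{h\1} = \Ad U^t_{(h+1)\1}\circ\phi_{(h+1)\1,h\1}$ --- but you assemble the global isomorphism differently. You rebuild the AF inductive limit at the level of $C([0,1])$-algebras, trivializing each $\mathcal{A}_{h\1}$ and conjugating by the strictly continuous unitary family $\tilde U_{h\1}$, whereas the paper transports everything to the groupoid picture and writes down a single map $\Psi$ directly on $C_c(\G_\Lambda\times[0,1])$: using the partition $\mathcal{P}$ of $\G_\Lambda$, every $f\in C_c(\G_\Lambda\times[0,1])$ is a \emph{finite} sum of functions supported on sets $Z(\mu_i,\nu_i)\times[0,1]$ with $Z(\mu_i,\nu_i)\in\mathcal{P}$, and $\Psi$ multiplies the $i$-th summand by the continuous scalar $t\mapsto\kappa_t(\mu_i)\overline{\kappa_t(\nu_i)}$. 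That makes it manifest that $\Psi$ is a $C([0,1])$-linear bijection of $C_c(\G_\Lambda\times[0,1])$ onto itself which is a $*$-isomorphism in each fiber, and Proposition C.10 of \cite{xprod} then forces $\Psi$ to be isometric; the hurdle you flag is thereby sidestepped entirely. If you do want to finish your version, that hurdle is real but not dangerous: the closed $C([0,1])$-subalgebra $B=\overline{\bigcup_h \mathcal{A}_{h\1}}$ satisfies $q_t(B)=C^*(\G_\Lambda,\sigma_{c_t})$ for every $t$, and a closed $C([0,1])$-submodule with full fibers equals the whole algebra by a partition-of-unity argument combined with $\|a\|=\sup_t\|q_t(a)\|$. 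One detail to fix in your setup: the global sections lifting $s_\lambda s_\mu^*$ are not in general the bare characteristic functions $1_{Z(\lambda,\mu)\times[0,1]}$; when $Z(\lambda,\mu)\notin\mathcal{P}$ the formula $\pi^t(s_\lambda s_\mu^*)(a)=1_{Z(\lambda,\mu)}(a)\,\overline{\sigma_{c_t}(bd,d\inv)}$ carries a $t$-dependent (but continuous) phase, so $\mathcal{A}_n$ should be defined using the sections $(a,t)\mapsto\pi^t(s_\lambda s_\mu^*)(a)$; with that choice your matrix-unit relations are indeed $t$-independent, as guaranteed by Lemma 8.4 of \cite{kps-twisted}.
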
 
\begin{proof}
Recall that 
\[C^*(\G_\Lambda \times [0,1])_t \cong C^*(\G_\Lambda, \sigma_{c_t}) \cong C^*(\Lambda, c_t) \cong C^*(\Lambda)\]
if $d = \delta b$.
Thus, the $C([0,1])$-algebras $C^*(\G_\Lambda \times [0,1], \omega)$ and $C([0,1]) \otimes C^*(\Lambda)$ have isomorphic fibers over each point $t \in [0,1]$.  

In order to prove the Proposition, we need to show that these isomorphisms $C^*(\G_\Lambda, \sigma_{c_t}) \cong C^*(\Lambda)$ vary continuously in $t$, so that they patch together to give us an isomorphism of $C([0,1])$-algebras $C^*(\G_\Lambda \times [0,1], \omega) \cong C([0,1]) \otimes C^*(\Lambda)$.   

For each $t \in [0,1]$, let $\pi^t: C^*(\Lambda, c_t) \to C^*(\G_\Lambda, \sigma_{c_t})$ denote the isomorphism described in Theorem 6.7 of \cite{kps-twisted}.  Let $\pi: C^*(\Lambda) \to C^*(\G_\Lambda)$ denote the equivalent isomorphism  for the case of a trivial cocycle $c$.  For each $n \in \Z^k$, write $U^t_n$ for the unitary $U_n^t: A_n \to A_n$ associated to the cocycle $c_t$ as above.  Setting
\[\Psi_t := \pi^t \circ \Ad U_*^t \circ \pi\inv\]
consequently gives an isomorphism of $C^*$-algebras $\Psi_t: C^*(\G_\Lambda) \to C^*(\G_\Lambda, \sigma_{c_t})$.

We claim that $\Psi := \{\Psi_t\}_{t \in [0,1]}$ defines an isomorphism of $C([0,1])$-algebras
\[\Psi: C^*(\G_\Lambda \times [0,1]) \to C^*(\G_\Lambda \times [0,1], \omega).\]  In order to prove this assertion, we begin by writing down an explicit formula for $\Psi_t$ on the characteristic functions $1_{Z(\mu,\nu)}\in C_c(\G_\Lambda)$ where $Z(\mu, \nu) \in \mathcal{P}$, where $\mathcal{P}$ is the partition of $\G_\Lambda$ described in Lemma 6.6 of \cite{kps-twisted}. 


Recall that the value of $\sigma_{c_t}(a,b)$ depends only on the sets $Z(\mu, \nu) \in \mathcal{P}$ containing the points $a,b,$ and $ab$ in $\G_\Lambda$.  Moreover, the proof of \cite{kps-twisted} Theorem 6.7 establishes that, if $1_{Z(\mu,\nu)}$ denotes the characteristic function on 
$Z(\mu, \nu) \subseteq \G_\Lambda$, and we write $a \in Z(\mu,\nu)$ as $a = bd$ where $b \in Z(\mu, s(\mu)),\ d  \in Z(s(\nu), \nu),$
\begin{align*} \pi^t(s_\mu s_\nu^*)(a) &= 1_{Z(\mu,\nu)}(a) \sigma_{c_t}(b,d) \overline{\sigma_{c_t}(d\inv, d)} = 1_{Z(\mu,\nu)}(a) \overline{\sigma_{c_t}(bd, d\inv)}.
\end{align*}
Moreover, we have $Z(\mu, s(\mu)) \in \mathcal{P} \ \forall \ \mu \in \Lambda$ by Lemma 6.6 of \cite{kps-twisted}.
If we also have $Z(\mu,\nu) \in \mathcal{P}$, then the elements $\alpha, \beta, \gamma$ in the formula for $\sigma_{c_t}(bd, d\inv)$ given in Definition \ref{sigma-c} are all units, so 
for any $t$, $\sigma_{c_t}(bd, d\inv) = 1$
 by our hypothesis that any cocycle $c$ satisfy $c(\lambda, s(\lambda)) = c(r(\lambda), \lambda) = 1$. Thus, 
 \[ Z(\mu, \nu) \in \mathcal{P} \ \Rightarrow\  \pi^t(s_\mu s_\nu^*) = 1_{Z(\mu,\nu)} \ \Rightarrow \ \Psi_t(1_{Z(\mu,\nu)}) = \kappa_t(\mu) \overline{\kappa_t(\nu)} 1_{Z(\mu,\nu)}.\]

Now, observe that each $f \in C_c(\G_\Lambda \times [0,1])$ can be written as a finite sum $f(a,t) = \sum_{i \in N} f_i(a,t),$
where, for all $i$, $f_i \in C(Z(\mu_i, \nu_i) \times [0,1])$ and  $Z(\mu_i, \nu_i) \in \mathcal{P}$.
Consequently, on $C_c(\G_\Lambda \times [0,1])$, our map $\Psi$ becomes
\begin{equation}
\Psi\left( \sum_{i \in N} f_i  \right)(a,t) = \sum_{i\in N} \Psi_t(f_i(\cdot, t))(a) =  \sum_{i \in N} \kappa_t(\mu_i) \overline{\kappa_t(\nu_i)} f_i(a,t);
\label{psi}
\end{equation}
the fact that all the sums are finite implies that $\Psi$ takes $C_c(\G_\Lambda \times [0,1]) $ onto $C_c(\G_\Lambda \times [0,1])$. 
  
%
 Since $\Psi$ is evidently $C([0,1])$-linear 
 and is a $*$-isomorphism in each fiber, 
 Proposition C.10 of \cite{xprod} 
tells us that $\Psi$  is norm-preserving.  Moreover, $\Psi$ is a $*$-homomorphism since the operations in $C_c(\G_\Lambda \times [0,1])$ preserve the fiber over $t \in [0,1]$, and each $\Psi_t$ is a $*$-homomorphism.

%

In other words, $\Psi$ extends to an isomorphism of $C([0,1])$-algebras 
\[\Psi: C^*(\G_\Lambda \times [0,1]) \cong C^*(\G_\Lambda \times [0,1], \omega).\] A straightforward check will establish that the identity map on $C_c(\G_\Lambda \times [0,1])$  induces an isomorphism $ id: C^*(\G_\Lambda \times [0,1])\to  C([0,1], C^*(\G_\Lambda))$ of $C([0,1])$-algebras; the isomorphism $C^*(\G_\Lambda) \cong C^*(\Lambda)$ 
of \cite{kp} Corollary 3.5(i) now finishes the proof.
%
%
%
%
%
 \end{proof}

\begin{rmk}
\label{phi}
Note that $\Psi$ induces an isomorphism $\Phi: C([0,1]) \otimes C^*(\Lambda) \to C^*(\G_\Lambda \times [0,1], \omega)$ as follows. 
If $Z(\mu,\nu) \in \mathcal{P}$ and $f \in C([0,1])$, then 
\begin{equation}
\Phi(f \otimes s_\mu s_\nu^*)(x,t) = f(t) 1_{Z(\mu,\nu)}(x) \kappa_t(\mu) \overline{\kappa_t(\nu)}.
\label{eqn-phi}
\end{equation}
\end{rmk}

\begin{rmk} Since evaluation at $t \in [0,1]$ induces a homotopy equivalence between $C([0,1], C^*(\Lambda))$ and $C^*(\Lambda)$, the isomorphism established in the previous Proposition implies  that evaluation at $t$ also induces a homotopy equivalence between  $C^*(\G_\Lambda \times [0,1], \omega)$ and its fiber algebra $C^*(\G_\Lambda, \sigma_{c_t})$ when $d = \delta b$.
\label{htopy-equiv}
\end{rmk}

\subsection{Proof of Theorem \ref{main}}
\label{5}

To leverage Proposition \ref{trivial} into the proof of Theorem \ref{main}, we will use the skew-product $k$-graphs $\Lambda \times_d \Z^k$:
\begin{defn}[\cite{kp} Definition 5.1]
Given a $k$-graph $(\Lambda,d)$, the \emph{skew-product $k$-graph $\Lambda \times_d \Z^k$} is the set $\Lambda \times \Z^k$, with the structure maps 
\[
r(\lambda, n) = (r(\lambda), n); \quad 
s(\lambda, n) = (s(\lambda), n + d(\lambda)); \quad 
d(\lambda, n) = d(\lambda),
\]
and multiplication given by $(\lambda, n)(\mu, n + d(\lambda)) = (\lambda \mu, n)$ for $(\lambda, \mu) \in \Lambda^{*2}$.
\end{defn}

Observe that the function $b: (\Lambda \times_d \Z^k)\z = \Lambda\z \times \Z^k \to \Z^k$ given by $b(v, n) = n$ satisfies $\delta b = d$ on $\Lambda \times_d \Z^k$. Moreover, if $\Lambda$ is row-finite and source-free, then so is $\Lambda \times_d \Z^k$.

We can now complete the proof of Theorem \ref{main}.

\begin{proof}[Proof of Theorem \ref{main}:]
Let $\phi: \Lambda \times_d \Z^k \to \Lambda$ be the projection onto the first coordinate: $\phi(\lambda, n) = \lambda$.  A cocycle $c$ on  $\Lambda$ induces a cocycle $c \circ \phi$ on the skew product $k$-graph $\Lambda \times_d \Z^k$:
\[c \circ \phi\left( (\lambda, n), (\mu, n + d(\lambda) ) \right) := c(\lambda, \mu)\]
whenever $(\lambda, \mu) \in \Lambda^{*2}$.
Note that if $\{c_t\}_{t \in [0,1]}$ is a homotopy of cocycles on $\Lambda$ 
 then $\{c_t \circ \phi\}_t$ is also a homotopy of cocycles on $\Lambda \times_d \Z^k$.

If $\omega$ is the homotopy of cocycles on $\G_{\Lambda \times_d \Z^k}$ associated to the homotopy $\{c_t\}_{t \in [0,1]}$ of cocycles  on $\Lambda$, then Proposition \ref{trivial} tells us that
\[C^*(\G_{\Lambda \times_d \Z^k} \times [0,1], \omega) \cong C([0,1]) \otimes C^*(\Lambda \times_d \Z^k).\]

Now, we
define an action of $\Z^k$ on $C([0,1]) \otimes C^*(\Lambda \times_d \Z^k)$ by setting
\begin{equation}f \otimes s_{\lambda, n} \cdot m := f \otimes s_{\lambda, n+m}. \label{zk-action} \end{equation}
To see that this formula gives us a well-defined action of $\Z^k$ on $C([0,1]) \otimes C^*(\Lambda \times_d \Z^k)$, one checks first that for each $m \in \Z^k$,
$\{s_{\lambda, m+n}: \lambda \in \Lambda, n \in \Z^k\}$ is a collection of partial isometries satisfying the defining axioms (CK1)-(CK4) for $C^*(\Lambda \times_d \Z^k)$.
Consequently, the universal property of $C^*(\Lambda \times_d \Z^k)$ implies that for each fixed $m \in \Z^k$,   the map $s_{\lambda, n} \mapsto s_{\lambda, n+m}$ determines a $*$-homomorphism 
\[\alpha_m: C^*(\Lambda \times_d \Z^k) \to C^*(\Lambda \times_d \Z^k).\]
Each $\alpha_m$ is invertible with inverse $\alpha_{-m}$; it follows that $m \mapsto \alpha_m$ defines a group action of $\Z^k$ on $C^*(\Lambda \times_d \Z^k)$.  Thus, Equation \eqref{zk-action} describes a well-defined action $ id \otimes \alpha$ of $\Z^k$ on $C([0,1]) \otimes C^*(\Lambda \times_d \Z^k)$, given by $m \mapsto id \otimes \alpha_m$.  The fact that the degree map on $\Lambda \times_d \Z^k$ is a coboundary now allows us to combine the action $id \otimes \alpha$ with the isomorphism  $\Phi: C([0,1]) \otimes C^*(\Lambda \times_d \Z^k) \to C^*(\G_{\Lambda \times_d \Z^k} \times [0,1], \omega)$ 
of Remark \ref{phi} to obtain an action $\beta$ of $\Z^k$ on $C^*(\G_{\Lambda \times_d \Z^k} \times [0,1], \omega)$:
\[\beta_n \left(\Phi(f \otimes s_{\mu, m} s_{\nu, m + d(\mu)-d(\nu)}^*) \right):= \Phi(id \otimes \alpha_n(f \otimes s_{\mu, m} s_{\nu, m + d(\mu)-d(\nu)}^*)).\]

Moreover, since both $id \otimes \alpha$ and $\Phi$ (and hence $\beta$) fix $C([0,1])$ by construction,
 Lemma 5.3 of \cite{kps-kthy} tells us that the crossed product
\[C^*(\G_{\Lambda \times_d \Z^k} \times [0,1], \omega) \rtimes_\beta \Z^k \cong \left( C([0,1]) \otimes C^*(\Lambda \times_d \Z^k) \right) \rtimes_{id \otimes \alpha} \Z^k\]
is a $C([0,1])$-algebra with fiber $C^*(\G_{\Lambda \times_d \Z^k}, \sigma_{c_t \circ \phi}) \rtimes_{\beta_t} \Z^k$, where 
\begin{align*}
 (\beta_t)_n( \Phi_t(s_{\mu, m}s_{\nu, m + d(\mu)-d(\nu)}^*)) &= \Phi_t(\alpha_n(s_{(\mu,m)} s_{(\nu, m+ d(\mu) - d(\nu))}^*))\\
 &= \kappa_t(\mu) \overline{\kappa_t(\nu)}  1_{Z((\mu, m+n), (\nu, m+n+ d(\mu) - d(\nu)))}
\end{align*}
whenever $Z((\mu, m+n), (\nu, m+n+ d(\mu) - d(\nu)))$ is in the partition $\mathcal{P}$ of $\G_{\Lambda \times_d \Z^k}$ that we used in the proof of Proposition \ref{trivial}.

Recall that we have a homotopy equivalence $q_t: C^*(\G_{\Lambda \times_d \Z^k} \times [0,1], \omega) \to C^*(\G_{\Lambda \times_d \Z^k}, \sigma_{c_t})$. 
A computation will show that $q_t$ is equivariant with respect to the actions $\beta, \beta_t$ of $\Z^k$; thus, Theorem 5.1 of \cite{kps-kthy} tells us that 
\begin{equation}K_*(C^*(\G_{\Lambda \times_d \Z^k} \times [0,1], \omega) \rtimes_\beta \Z^k) \cong K_*(C^*(\G_{\Lambda \times_d \Z^k} , \sigma_{ c_t \circ \phi}) \rtimes_{\beta_t} \Z^k).\label{gpoid-kthy}
\end{equation}

Thanks to Lemma 5.2 of \cite{kps-kthy}, we know that  
$C^*(\Lambda \times_d \Z^k, c_t \circ \phi) \rtimes_{lt} \Z^k \sim_{ME} C^*(\Lambda, c_t)$,
where $lt_m (s_{\lambda, n}) = s_{\lambda, n+m}$.
To make use of this result, we need to show that $\beta_t $ induces the action $lt$ on $C^*(\Lambda \times_d \Z^k, c_t \circ \phi)$.

Recall from 
the proof of Proposition \ref{trivial}  that $\pi^t(s_{\lambda, m}) = 1_{Z((\lambda, m), (s(\lambda), m + s(\lambda)))}$, since $Z((\lambda, m), (s(\lambda), m + s(\lambda))) \in \mathcal{P}$ always.  Observe that
\begin{equation}C^*(\G_{\Lambda \times_d \Z^k}, \sigma_{ c_t \circ \phi}) \rtimes_{\beta_t} \Z^k \cong C^*(\Lambda \times_d \Z^k,  c_t \circ \phi) \rtimes_{\gamma_t} \Z^k, \text{ where}
\label{actions}\end{equation}
\begin{align*}
(\gamma_t)_n(s_{\lambda, m}) &:= (\pi^t)\inv((\beta_t)_n(\pi^t(s_{\lambda, m}))) = (\pi^t)\inv( \beta_t)_n(1_{Z((\lambda, m), (s(\lambda), m))}) \\
&= (\pi^t)\inv( \beta_t)_n(\Phi_t(\overline{\kappa_t(\lambda)} s_{(\lambda, m)})) = (\pi^t)\inv \left( \Phi_t(\alpha_n(\overline{\kappa_t(\lambda)} s_{\lambda, m} ))  \right) \\
&= (\pi^t)\inv \left( \Phi_t(\overline{\kappa_t(\lambda)} s_{\lambda, m+n}) \right)= (\pi^t)\inv \left(1_{Z((\lambda, m+n), (s(\lambda), m+n))} \right) \\
&= s_{\lambda, m+n}.
\end{align*} 
It follows that the action $(\gamma_t)$ induced by $\beta_t$ agrees with $lt$, as desired.
 Now, the Morita equivalence of Lemma 5.2 of \cite{kps-kthy} and Equation \eqref{actions} tell us that 
\begin{equation}C^*(\G_{\Lambda \times_d \Z^k}, \sigma_{ c_t \circ \phi}) \rtimes_{\beta_t} \Z^k \sim_{ME} C^*(\Lambda, c_t).\label{me}
\end{equation}
Combining Equations \eqref{gpoid-kthy} and \eqref{me} now yields 
\[K_*(C^*(\Lambda, c_t)) \cong  K_*(C^*(\G_{\Lambda \times_d \Z^k} \times [0,1], \omega) \rtimes_\beta \Z^k)\]
for any $t \in [0,1]$.  It follows that, if $\{c_t\}_{t\in[0,1]}$ is a homotopy of cocycles on a row-finite $k$-graph $\Lambda$ with no sources, then for any $s, t \in [0,1]$, 
\[K_*(C^*(\Lambda, c_t)) \cong K_*(C^*(\Lambda, c_s)).\]

It remains to show that this isomorphism preserves the $K$-theory class of each  vertex projection $s_v$.  Essentially, this follows because the cocycles $c_t$, and thus the functions $\kappa_t$, are all trivial on any $v \in \text{Obj}(\Lambda)$.

To be precise, let $v \in \text{Obj}(\Lambda)$ and define $f_v \in C_c(\Z^k, C_c(\G_{\Lambda \times_d \Z^k} \times [0,1]))) \subseteq C^*(\G_{\Lambda \times_d \Z^k} \times [0,1], \omega) \rtimes_\beta \Z^k$  by 
\[f_v(n)(a, t) = \left\{ \begin{array}{lc} 1, & a \in Z_{(v,0), (v,0)} \text{ and } n=0 \\ 0,& \text{else.} \end{array} \right. \]
Then the projection $q_t \rtimes id(f_v)$ of $f_v$ onto the fiber algebra $C^*(\G_{\Lambda \times_d \Z^k}, \omega_t) \rtimes_{\beta_t} \Z^k$ is independent of the choice of $t \in [0,1]$:
\[q_t \rtimes id(f_v)(n)( a) =  \left\{ \begin{array}{lc} 1, & a \in Z_{(v,0), (v,0)} \text{ and } n=0 \\ 0,& \text{else} \end{array} \right. \]
for any $t \in [0,1]$.
Moreover, the isomorphism 
$\Phi_t: C^*(\Lambda \times_d \Z^k, c_t \circ \phi) \to C^*(\G_{\Lambda \times_d \Z^k}, \sigma_{c_t \circ \phi})$
of Remark \ref{phi} satisfies
\begin{equation}
\Phi_t \rtimes id (j(s_{(v,0)})) = q_t\rtimes id(f_v),
\label{idk2}
\end{equation}
where $j: C^*(\Lambda \times_d \Z^k, c_t \circ \phi) \to C^*(\Lambda \times_d \Z^k, c_t \circ \phi) \rtimes_{lt} \Z^k$ is the canonical embedding of $C^*(\Lambda \times_d \Z^k, c_t \circ \phi)$ into the crossed product.

The fact that the Morita equivalence $C^*(\Lambda, c_t) \sim_{ME} C^*(\Lambda \times_d \Z^k, c_t \circ \phi) \rtimes_{lt} \Z^k$
takes $s_v \in C^*(\Lambda, c_t)$ to $j(s_{(v,0)})$ (cf.~Lemma 5.2 in \cite{kps-kthy}) thus implies that our $K$-theoretic isomorphism
$ K_*(C^*(\G_{\Lambda \times_d \Z^k} \times [0,1], \omega) \rtimes_\beta \Z^k) \to K_*(C^*(\Lambda, c_t)),$
which is given by the composition of the Morita equivalence \eqref{me} with the $*$-homomorphism 
\begin{align*}
q_t \rtimes id: C^*(\G_{\Lambda \times_d \Z^k} \times [0,1], \omega) \rtimes_\beta \Z^k & \to C^*(\G_{\Lambda \times_d \Z^k}, \omega_t) \rtimes_{\beta_t} \Z^k \\
& \cong C^*(\Lambda \times_d \Z^k, c_t \circ \phi) \rtimes_{\gamma_t} \Z^k,
\end{align*}
takes $[f_v]$ to $[s_v]$ for any $v \in \text{Obj}(\Lambda)$ and any $t \in [0,1]$.  Consequently, the isomorphism $ K_*(C^*(\Lambda, c_t)) \cong K_*(C^*(\Lambda, c_s))$
preserves the class of $s_v$, as claimed.  This finishes
 the proof of Theorem \ref{main}. 
 \end{proof}

\begin{rmk}
It's tempting to think that since $C^*(\Lambda \times_d \Z^k, c \circ \phi) \cong C^*(\Lambda \times_d \Z^k)$ and $C^*(\Lambda, c) \sim_{ME} C^*(\Lambda \times_d \Z^k, c \circ \phi) \rtimes_{lt} \Z^k$ for any cocycle $c$ on $\Lambda$, any two twisted $k$-graph $C^*$-algebras should be Morita equivalent.  This statement is false, however (the rotation algebras provide a counterexample).  The flaw lies in the fact that the isomorphism $\Ad U_*: C^*(\Lambda \times_d \Z^k, c \circ \phi) \to C^*(\Lambda \times_d \Z^k)$ is not equivariant with respect to the left-translation action of $\Z^k$, 
so the isomorphism 
\[C^*(\Lambda \times_d \Z^k, c \circ \phi) \cong C^*(\Lambda \times_d \Z^k)\]
does not pass to an isomorphism $C^*(\Lambda \times_d \Z^k, c \circ \phi) \rtimes_{lt} \Z^k \to C^*(\Lambda \times_d \Z^k) \rtimes_{lt} \Z^k$.  In other words, a $K$-theoretic equivalence of twisted $k$-graph $C^*$-algebras is the best result we can hope for in general.
\end{rmk}

\section{Future work} 
The standing hypotheses of this paper, that our $k$-graphs be row-finite and source-free, are slightly more restrictive than the current standard for $k$-graphs.  Thus, we would like  to extend Theorem \ref{main} to apply to all finitely aligned $k$-graphs. Finitely aligned $k$-graphs were introduced in \cite{raeburn-sims, rsy2}, and it seems that they constitute the largest class of $k$-graphs to which one can profitably associate a $C^*$-algebra.
  However, the Kumjian-Pask construction of a groupoid $\G_\Lambda$ associated to a $k$-graph $\Lambda$, which we described  in Section \ref{2} and which we use throughout the proof of Theorem \ref{main}, only works when $\Lambda$ is row-finite and source-free.  In \cite{fmy}, Farthing, Muhly, and Yeend provide an alternate construction of a groupoid $\G$ which can be associated to an arbitrary finitely-aligned $k$-graph, and we hope that this approach will allow us to apply groupoid results such as Theorem \ref{fiber-alg} to study the effect on $K$-theory of homotopies of cocycles for finitely-aligned $k$-graphs.

%

\bibliographystyle{amsplain}
\bibliography{eagbib}

\providecommand{\bysame}{\leavevmode\hbox to3em{\hrulefill}\thinspace}
\providecommand{\MR}{\relax\ifhmode\unskip\space\fi MR }
\providecommand{\MRhref}[2]{%
  \href{http://www.ams.org/mathscinet-getitem?mr=#1}{#2}
}
\providecommand{\href}[2]{#2}
\begin{thebibliography}{10}

\bibitem{jon-astrid}
J.H. Brown and A.~an~Huef, \emph{Decomposing the {$C^*$}-algebras of groupoid
  extensions}, Proceedings of the American Mathematical Society \textbf{142}
  (2014), 1261--1274.

\bibitem{clark-aH}
L.O. Clark and A.~an~Huef, \emph{The representation theory of ${C}^*$-algebras
  associated to groupoids}, Mathematical Proceedings of the Cambridge
  Philosophical Society \textbf{153} (2012), 167--191.

\bibitem{ELPW}
S.~Echterhoff, W.~L\"uck, N.C. Phillips, and S.~Walters, \emph{The structure of
  crossed products of irrational rotation algebras by finite subgroups of
  ${SL}_2({\Z})$}, Journal f\"ur die reine und angewandte Mathematik
  \textbf{639} (2010), 173--221.

\bibitem{fmy}
C.~Farthing, P.S. Muhly, and T.~Yeend, \emph{Higher-rank graph
  ${C}^*$-algebras: an inverse semigroup and groupoid approach}, Semigroup
  Forum \textbf{71} (2005), no.~2, 159--187.

\bibitem{transf-gps}
E.~Gillaspy, \emph{${K}$-theory and homotopies of 2-cocycles on transformation
  groups}, Journal of Operator Theory (to appear).

\bibitem{geoff-thesis}
G.~Goehle, \emph{Groupoid crossed products}, Ph.D. thesis, Dartmouth College,
  May 2009.

\bibitem{dd-fell}
A.~an Huef, A.~Kumjian, and A.~Sims, \emph{A {D}ixmier-{D}ouady theorem for
  {F}ell algebras}, Journal of Functional Analysis \textbf{260} (2011),
  1543--1581.

\bibitem{c*-diagonals}
A.~Kumjian, \emph{On ${C}^*$-diagonals}, Canadian Journal of Mathematics
  \textbf{38} (1986), 969--1008.

\bibitem{kp}
A.~Kumjian and D.~Pask, \emph{Higher rank graph ${C}^*$-algebras}, New York
  Journal of Mathematics \textbf{6} (2000), 1--20.

\bibitem{kps-kthy}
A.~Kumjian, D.~Pask, and A.~Sims, \emph{On the {$K$}-theory of twisted
  higher-rank-graph ${C}^*$-algebras}, Journal of Mathematical Analysis and
  Applications \textbf{401} (2013), no.~1, 104--113.

\bibitem{kps-twisted}
\bysame, \emph{On twisted higher-rank graph ${C}^*$-algebras}, Transactions of
  the American Mathematical Society (2013).

\bibitem{muhly-gpoids}
P.S. Muhly, \emph{Coordinates in operator algebra}.

\bibitem{cts-trace-gpoid-III}
P.S. Muhly, J.N. Renault, and D.P. Williams, \emph{Continuous-trace groupoid
  ${C}^*$-algebras. {III}}, Transactions of the American Mathematical Society
  \textbf{348} (1996), 3621--3641.

\bibitem{cts-trace-gpoid-II}
P.S. Muhly and D.P. Williams, \emph{Continuous trace groupoid ${C}^*$-algebras
  {II}}, Mathematica Scandinavica \textbf{70} (1992), 127--145.

\bibitem{pims-voic}
M.~Pimsner and D.-V. Voiculescu, \emph{Exact sequences for {$K$}-groups and
  ${E}xt$-groups of certain crossed-product ${C}^*$-algebras}, Journal of
  Operator Theory \textbf{4} (1980), 93--118.

\bibitem{raeburn-sims}
I.~Raeburn and A.~Sims, \emph{Product systems of graphs and the {T}oeplitz
  algebras of higher-rank graphs}, Journal of Operator Theory \textbf{53}
  (2005), no.~2, 399--429.

\bibitem{rsy2}
Iain Raeburn, Aidan Sims, and Trent Yeend, \emph{The ${C}^*$-algebras of
  finitely aligned higher-rank graphs}, Journal of Functional Analysis
  \textbf{213} (2004), 206--240.

\bibitem{renault}
J.~Renault, \emph{A groupoid approach to ${C}^*$-algebras}, Lecture Notes in
  Mathematics, vol. 793, Springer-Verlag, 1980.

\bibitem{robertson-steger}
G.~Robertson and T.~Steger, \emph{Affine buildings, tiling systems and higher
  rank {C}untz-{K}rieger algebras}, Journal f\"ur die reine und angewandte
  Mathematik \textbf{513} (1999), 115--144.

\bibitem{TXLG}
J.-L. Tu, P.~Xu, and C.~Laurent-Gengoux, \emph{Twisted ${K}$-theory of
  differentiable stacks}, Annales Scientifiques de l'\'Ecole Normale
  Sup\'erieure \textbf{37} (2004), 841--910.

\bibitem{xprod}
D.P. Williams, \emph{Crossed products of ${C}^*$-algebras}, Mathematical
  Surveys \& Monographs, vol. 134, AMS, 2007.

\end{thebibliography}
\end{document}